\documentclass{amsart}

\usepackage{amssymb}
\usepackage{amsfonts}

\usepackage{ amsmath,amssymb,amsbsy,amsfonts, latexsym,amsopn,amstext, amsxtra,euscript,amscd}

\usepackage{cite}

\usepackage{amsrefs}

\usepackage{url}
\newcommand\myurl[1]{\url{#1}}

\BibSpec{webpage}{%
  +{}{\PrintAuthors} {author}
  +{,}{ \textit} {title}
  +{}{ \parenthesize} {date}
  +{,}{ \myurl} {myurl}
  +{,}{ } {note}
  +{.}{ } {transition}
}

\usepackage{xy} 
\input xy \xyoption{all}


\newtheorem{thm}{Theorem}
\newtheorem{proposition}{Proposition}
\newtheorem{lem}{Lemma}
\newtheorem{rem}{Remark}
\newtheorem{defn}{Definition}


\def\Z{\mathbb Z}

\def\C{\mathbb C}
\def\P{\mathbb P^1 (k)}

\def\Z{{\mathbb Z}}
\def\C{{\mathbb C}}


\def\L{\mathcal L}
\def\H{\mathcal H}
\def\M{\mathcal M}
\def\X{\mathcal X}
\def\S{\mathcal S}


\def\p{\mathfrak p}
\def\u{\mathfrak u}

\def\ss{\mathfrak s}
\def\u{\mathfrak u}
\def\u{\mathfrak s}
\def\p{\mathfrak p}
\def\s{\mathfrak s}


\def\D{\Delta}

\def\emb{\hookrightarrow}

\def\l{\lambda}
\def\a{\alpha}
\def\e{\varepsilon}

\def\t{\tau}
\def\<{\langle}
\def\>{\rangle}
\def\zz{\zeta}
\def\z{\omega}

\def\iso{{\, \cong\, }}

\def\bG{\bar G}


\def\bG{{\bar G}}

\def\Z{\mathbb Z}

\def\C{\mathbb C}

\def\C{\mathcal C}
\def\H{\mathcal H}
\def\M{\mathcal M}

\def\l{\lambda}
\def\a{\alpha}

\def\p{\mathfrak p}

\def\Z{\mathbb Z}

\def\C{\mathbb C}
\def\L{\mathcal L}
\def\H{\mathcal H}
\def\M{\mathcal M}
\def\l{\lambda}
\def\s{\sigma}
\def\a{\alpha}
\def\p{\mathfrak p}
\def\u{\mathfrak u}

\def\<{\langle}
\def\>{\rangle}

\def\emb{\hookrightarrow}

\def\D{\Delta}

\def\u{\mathfrak u}
\def\L{\mathcal L}

\def\S{\mathcal S}

\def\P{\mathbb P^1 (k)}

\def\Z{{\mathbb Z}}
\def\C{{\mathbb C}}
\def\M{{\mathcal M}}
\def\H{\mathcal H}
\def\L{\mathcal L}

\def\Aut{\mbox{Aut }}
\def\bAut{\overline {\mathrm{Aut}}}

\def\D{\Delta}
\def\e{\varepsilon}
\def\u{\mathfrak s}

\def\bG{\bar G}
\def\t{\tau}

\def\a{{\alpha }}

\def\<{\langle}
\def\>{\rangle}

\def\s{\mathfrak s}

\def\p{\mathfrak p}

\def\emb{\hookrightarrow }

\def\zz{\zeta}

\def\z{\omega}

\begin{document}

\title{Bielliptic curves of genus 3 in the hyperelliptic moduli}

\author{T. Shaska         \and        F. Thompson }

\maketitle

\begin{abstract}
In this paper we study bielliptic curves of genus 3 defined over an algebraically closed field $k$ and the intersection of the moduli space $\M_3^b$ of such curves with the hyperelliptic moduli $\H_3$.  Such intersection $\S$   is an irreducible,  3-dimensional, rational  algebraic variety.  We determine the equation of this space in terms of the $Gl(2, k)$-invariants of binary octavics as defined in \cite{hyp_mod_3}   and find a birational parametrization of $\S$.  We also compute all possible subloci of curves for all possible automorphism group $G$.   Moreover, for every rational moduli point  $\p \in \S$, such that $| \Aut (\p) | > 4$, we give explicitly a rational model of the corresponding  curve over its field of moduli in terms of the $Gl(2, k)$-invariants.
\keywords{genus 3 hyperelliptic curves \and dihedral invariants \and absolute invariants }
%
\end{abstract}

\section{Introduction}
The moduli space $\M_g$  of algebraic curves of genus $g\geq 2$, defined over an algebraically closed field $k$,  is an interesting object that has received plenty of attention since the mid XX-century.  It is an irreducible  quasi-projective variety of dimension $3g - 3$. Understanding the stratification of this space has been also a major problem with many papers written on the subject to this day.  There are two main difficulties on this problem:   

\emph{i)  an explicit description of $\M_g$ is not known (i.e., a coordinate in $\M_g$),}

\emph{ii) a list of automorphism groups for a fixed $g\geq 2$ has  not been known. }  \\

\noindent    Naturally one has a better chance to address the above problem if focused on the hyperelliptic sublocus $\H_g$ of $\M_g$, since it is easier to pick a coordinate on the space $\H_g$.  After all, the hyperelliptic curves were well understood since the XIX-century and restricting the problem to the hyperelliptic locus seems reasonable.  It was well known to classical algebraic geometers of the XIX-century that the isomorphism classes of hyperelliptic curves defined over an algebraically closed field $k$ correspond to the orbits of the $GL_2 (k)$ action on the space of binary forms of degree $2g+2$ with coefficients from $k$.  This was, among others, one of the main motivations of the invariant theory during the XIX-century. For the generalization to the case of superelliptic curves one can check \cite{super2}. 

The case of genus 2 had been studied extensively by XIX-century mathematicians; see \cite{Burkhardt1, Burkhardt2} even though the concept of the moduli space was not quite refined at the time.  
About a decade ago Gaudry/Schost in \cite{gaudry} attacked the problem for $g=2$ from the computational point of view. After all, a coordinate in $\M_2$ could be fixed using the Igusa invariants and the list of automorphism groups of genus 2 was known; see \cite{Ge} among others.  At the same time that \cite{gaudry} was being circulated as a preprint, Shaska/V\"olklein \cite{sh-v} considered the problem from a more group-theoretical point of view.  Of course, the main case in both those papers was the case when the genus 2 curves had an elliptic involution.  The locus $\L_2$ of such curves is a 2-dimensional irreducible variety in $\M_2$  computed in both papers. In the process,  a group action was discovered in \cite{sh-v}  and its $u, v$ invariants were instrumental in computing equations for the strata of $\M_2$.  The map  
\begin{equation}\label{dihedral}
 \phi_2 (u, v) \to (i_1, i_2, i_3) 
 \end{equation} 
provides a birational parametrization of the space $\L_2$, where $i_1, i_2, i_3$ are $GL_2 (k)$-invariants in the space of binary sextics.   The singular locus of this map correspond to genus 2 curves with larger automorphism group; see \cite[Lemma 3]{sh-v}. The paper \cite{sh-v} spurred interest in two directions.  First, it naturally brought to the attention of the authors  the problem of automorphism groups of curves of genus $g\geq 2$.  This corresponds to the ii) part of the problem stated in the beginning. Second, naturally raised the question whether the invariants $u, v$ for $g=2$ could be generalized to higher genus.  In the next two paragraphs we consider each direction in more details.  

Determining the list of automorphism groups of algebraic curves of genus $g\geq 2$ is a classical problem.  There were hundreds of papers in the subject before 2001, most of them considering specific cases for small genus.  However, there was one interesting development at the time that it seems as it did not get the attention  it deserved. Breuer computed all signatures of the groups acting on compact Riemann surfaces for genus $g \leq 48$;   see \cite{breuer}.  The restriction $g \leq 48$ is merely technical and Breuer's algorithm   works for any genus, providing that some careful analysis is required for sporadic cases.  Using results in \cite{breuer} and the theory of Hurwitz spaces,  Magaard, Shpectorov, Shaska, V\"olklein determined an algorithm of how to determine the list of full automorphism  groups of curves for any given genus $g\geq 2$; see \cite{kyoto}.   In \cite{kyoto}  a complete list of full automorphism groups for curves of genus 3 was determined and the corresponding equations were provided as a way to illustrate the methods described in that paper.  There were tens of papers on the case of genus $g=3$ before \cite{kyoto} appeared. Moreover, by methods in \cite{kyoto}  the list of full automorphism groups of curves for any genus $g \geq 2$ can be determined. This settles the second part ii) of the initial problem.

The second direction that was spurred by \cite{sh-v} was the problem of generalizing the map \eqref{dihedral} to higher genus.  Natural questions to follow would be whether the curves with larger automorphism groups would be in the singular locus of $\phi$.  The group action discovered in \cite{sh-v} was generalized in \cite{sh_05} and then in a more formal paper in \cite{g-sh} were such invariants in higher genus were called \textbf{dihedral invariants}. For a genus $g\geq 2$ now we have a map
\begin{equation}
 \phi_g \,  (\s_4, \dots , \s_g) \to \left(t_1, \dots , t_{2g-1} \right),
 \end{equation}
where $t_1, \dots , t_{2g-1}$ are $GL_2 (k)$-invariants in the space of binary forms of degree $2g-1$. 
In papers \cite{issac}, \cite{g-sh-s}, and \cite{g-sh} the case of stratification of the hyperelliptic moduli $\H_3$ was treated to illustrate the general theory.  Dihedral invariants have been used quite extensively since by many authors. They were generalized for fields of positive characteristic by \cite{AK} and are defined again in the projective version in \cite{ritz2},  where they are renamed  as  \textit{dihedral arithmetic invariants}.

This paper takes another look at the study of genus 3 hyperelliptic curves with extra automorphisms.  The general strategy for $g=3$   was quite obvious a decade ago; one computes the Shioda invariants defined in \cite{shi1} starting from the Table 1 of \cite{kyoto} and then eliminating the parameters which appear as coefficients of the curve.   Such computations were simplified considerable via  the dihedral invariants $\s_2, \s_3, \s_4$. There was an obvious drawback compared to the genus $g=2$ case; the $GL_2 (k)$-invariants for $g=3$ were not known.  Hence, the obvious strategy was to describe the strata in terms of the $SL_2 (k)$-invariants defined by Shioda in \cite{shi1}.  This approach is taken in  \cite{ritz1}.  It is not clear from \cite{ritz1} if the dihedral invariants were used in these computations or they were performed straight from the  equations of the curves as in Table 3 in \cite{kyoto}.   In any case, Shioda invariants $J_2, \dots , J_{10}$ are computed in \cite{ritz1} and using syzygies determined in \cite[Theorem 5]{shi1} the authors determine each loci in $\H_3$.  In \cite{hyp_mod_3} it was shown that the syzygies determined by Shioda in \cite[Theorem 5]{shi1} are not correct.   It is unclear if the authors in \cite{ritz1} have corrected such  syzygies, otherwise all the results of \cite{ritz1} could be incorrect.  

The motivation for this paper was the definition of $GL_2(k)$-invariants in \cite{hyp_mod_3} where an explicit equation of the hyperelliptic moduli  $\H_3$ is given in the ambient space $\C^6$.  Using the absolute invariants $t_1, \dots , t_6$ as in \cite{hyp_mod_3} and the dihedral invariants $\s_2, \s_3, \s_4$ one can easily compute the locus in $\H_3$ for each case of Table 3 of \cite{kyoto}.  The drawback of invariants $t_1, \dots , t_6$ is that they are not defined everywhere.  However, this is done by choice so that their degrees are kept small.  This makes computations a lot easier.  One can get projective equations (i.e., equations in terms of $J_2, \dots , J_8$) from our equations simply by replacing $t_1, \dots , t_6$ with their definitions and clearing denominators.

The paper is organized as follows. In section 2 we give a brief description of the invariants of the binary octavics and definitions of absolute invariants.  Notice that our definitions of $J_2, \dots , J_8$ are slightly different from those of Shioda.  In section 3 we discuss genus 3 hyperelliptic curves with an elliptic involution and derive an parametric equation for such family of curves.  This is rather known material that has been treated in \cite{sh_05, g-sh}.  

Section 4 is the main section of the paper where the equation for the locus of curves with an elliptic involution.  The main theorem here describes the equation of the irreducible sub variety $\S$ in the hyperelliptic moduli $\H_3$.  We show that for a generic curve $C$ in $\S$ the field of definition is at most a degree 2 extension of the field of moduli.  This is an improvement from \cite{ritz1} where it is shown that this bound is eight.  For example, for the case when the group is $V_4$, we get a model for the parametric curve defined over a quadratic extension of the field of moduli versus a degree 8 extension in \cite{ritz1}.

In section 5, we determine the equation of all 1 and 2-dimensional loci for any fixed automorphism group $G$.  Parametrization of such loci were also given in \cite{g-sh-s}. Here we compute them in terms of the absolute invariants $t_1, \dots , t_6$.  

The goal of this paper was to describe the stratification of the space $\H_3$ in terms of the absolute invariants $t_1, \dots , t_6$.  The benefit of this approach is that there are fewer equations and even simpler ones.  The results in \cite{hyp_mod_3} make it possible that we do not have to use the invariants $J_9,  J_{10}$ and have fewer equations in each case.  We get better results compared to \cite{ritz1} in the case of the group $V_4$ and $\Z_2^3$ on the minimal equation of the curves over their field of moduli.  

In the case of group $\Z_2^3$ we prove that the field of moduli is a field of definition and give a model of the curve over its field of definition.  Some of these results were not new to us, since they were proved in \cite{g-sh}.  However, in this paper we are able to explicitly describe such results in terms of the absolute invariants $t_1, \dots , t_6$. All our results are implemented in a Maple package which is provided for free on \cite{homepage}.

\medskip

\noindent \textbf{Notation:}  Throughout this paper, by a "curve" we mean an irreducible algebraic curve defined over an algebraically closed field  $k$.  While we use invariants $J_2, \dots , J_8$ of binary octavics as Shioda \cite{shi1}, the reader must be aware that our definitions are not the same as those in Shioda's paper, instead we use the definitions as \cite{hyp_mod_3}. We also use the dihedral invariants $\s_2, \s_3, \s_4$ which are the same as those used in \cite{g-sh-s} $u, v, w$,  where $\s_4=u$, $\s_3=v$, $\s_2= w$.


%

\section{Bielliptic genus 3 curves}
Let $k$ be an algebraically closed field of characteristic zero and $\X$ an irreducible, smooth, projective curve of genus $g \geq 3$ defined over $k$. As usual, we denote by $\M_g$ the coarse moduli space of smooth curves of genus $g\geq 2$ and by $\H_g$ the hyperelliptic locus in $\M_g$. The isomorphism class of $C$, i.e. the corresponding point in $\M_g$, is denoted by $[C]$.  

A curve $C$ is called \textit{bielliptic} if it admits a degree 2 morphism $\pi : C \to E$ onto an elliptic curve.  Let 
\[ \M_g^b = \{ [C] \in \M_g \, : \, C \, \, \textit{bielliptic } \} \]
be the locus of bielliptic curves in $\M_g$.   $\M_g^b$ is an irreducible $(2g-2)$-dimensional sub variety of $\M_g$.  For $g=3$ $\M_3^b$ is the unique component of maximal dimension of the singular locus $Sing (\M_3)$; see \cite{cornalba0, cornalba1} for details. It is known that 

i) $\M_3^b$ is rational

ii) $\M_3^b \cap \H_3$ is an irreducible, codimension 1, rational subvariety of $\M_3^b$,

\noindent see \cite[Theorem 1.1]{BC} for details.  In this paper we aim to find an algebraic equation for the space $\S := \M_3^b \cap \H_3$. An algebraic equation for $\M_3^b$ using invariants of ternary quartics and a theorem of Kovalevskaja is intended in \cite{dolga}.

Let $\a$ be the element in $\Aut (C)$ which interchanges the sheets of $\pi : C \to E$ such that $E \iso C/\< \a \>$.  We call $\a$ the \textbf{elliptic involution} of $C$ corresponding to $\pi$. Hence, the space $\S= \M_3^b \cap \H_3$ is exactly the space of genus 3 hyperelliptic curves with elliptic involutions.  Such space has beed studied before from the point of view of automorphism groups,  as described in details in the introduction.


For a fixed group $G$ acting on a genus $g$ algebraic curves $\X_g$ we have a covering $\X_g \to \X_g/G$.  All possible  ramification structures of such covering for any genus $g$ hyperelliptic curves were determined in \cite{serdica}.  Indeed, this is also done for all superelliptic curves; see \cite{beshaj-2} for details.  In the case of hyperelliptic curves of genus 3, each group occurs only with one signature; see \cite{kyoto}.  Hence,  there is no confusion if we denote by $\S (G)$ the locus in $\H_3$ of all curves with automorphism group isomorphic to $G$ (i.e., $G \emb \Aut( \X_g)$).  The loci $\S (G)$ is not a priori irreducible.    In general, irreducibility is checked by  the braid action on Nielsen tuples. The locus $\S ( G)$ is a Hurwitz space of covers with monodromy group  $G$ and fixed ramification structure.  
However, under our assumptions ($g=3$ and hyperelliptic) this is always the case as shown in  \cite{serdica} and we will avoid that discussion here.

\section{Genus 3 hyperelliptic fields with elliptic involutions}

Let $K$ be a genus 3 hyperelliptic field. Then $K$ has exactly one genus 0 subfield of degree 2, call it $k(X)$.  It is the fixed field of the \textbf{hyperelliptic involution} $\z_0$ in $\Aut (K)$. Thus, $\z_0$ is central in $\Aut (K)$, where  $\Aut (K)$ denotes the group $\Aut (K/k)$. It induces a subgroup of $\Aut (k(X))$ which is naturally isomorphic to $\bAut (K):= \Aut (K)/\<\z_0\>$. The latter is called the \textbf{reduced automorphism group} of $K$.


If $\z_1$ is a non-hyperelliptic involution in $G$ then $\z_2:=\z_0\, \z_1$  is another one. So the   non-hyperelliptic involutions come naturally in (unordered) pairs $\z_1$, $\z_2$. 
These pairs correspond  bijectively to the Klein 4-groups in $G$. Indeed,  each Klein 4-group in $G$ contains $\z_0$.

\begin{defn}
We will consider pairs $(K, \e)$  with $K$ a genus 3 hyperelliptic  field and $\e$ an non-hyperelliptic involution in $\bG$. Two such pairs $(K,\e)$ and  $(K', \e')$ are called isomorphic if there is a $k$-isomorphism  $\a: K \to K'$ with $\e' = \a \e \a^{-1}$.
\end{defn}

Let  $\e$ be an non-hyperelliptic involution in $\bG$. We can choose the generator $X$ of $\mbox{Fix}(\z_0)$      such that $\e(X)=-X$.  Then $K=k(X,Y)$ where $X, Y$ satisfy equation
\[ Y^2 = (X^2-\a_1^2) (X^2-\a_2^2) (X^2-\a_3^2) (X^2-\a_4^2)  \]
for some $\a_i \in k$, $i=1, \dots, 4$.  Denote by
\begin{equation}
\begin{split}
s_1 = & - \left( \a_1^2 + \a_2^2 + \a_3^2 + \a_4^2 \right) \\
s_2 = & \,  (\a_1 \a_2)^2 + (\a_1 \a_3)^2 + (\a_1 \a_4)^2 + (\a_2 \a_3)^2 + (\a_2 \a_4)^2 + (\a_3 \a_4)^2 \\
s_3 = & -   ( \a_1\,\a_2\,\a_3 )^2 - (\a_4\,\a_1\,\a_2)^2 - (\a_4\,\a_3\,\a_1)^2 - (\a_4\,\a_3\,\a_2)^2 \\
s_4 = & - \left( \a_1 \a_2 \a_3 \a_4 \right)^2\\
\end{split}
\end{equation}
Then,  we have 
\[ Y^2=X^8+s_1 X^6+ s_2 X^4+ s_3 X^2+s_4  \]
with $s_1, s_2, s_3, s_4 \in k$, $s_4\ne 0$. Further $E_1=k(X^2,Y)$ and $ C=k(X^2, YX)$ are the two  subfields corresponding to $\e$ of genus 1 and 2 respectively.   

\par Preserving the condition $\e(X)=-X$ we  can further modify $X$ such that  $s_4=1$. Then, we have the following:

\begin{lem}  Every genus 3 hyperelliptic curve $\X$,  defined over a field $k$, which has an non-hyperelliptic involution has equation 
\begin{equation}\label{eq3}
Y^2=X^8+a X^6+ b X^4+ c X^2+1
\end{equation}
for some $a, b, c \in k^3$,  where the polynomial on the right has non-zero discriminant.
\end{lem}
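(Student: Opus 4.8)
The plan is to build directly on the normal form derived in the paragraphs above, so that the only genuine work left is the final rescaling and the smoothness bookkeeping. First I would recall why the branch points split into four $\e$-stable pairs. Since $\e \in \bG$ is a non-hyperelliptic involution, it fixes exactly two points of the genus $0$ quotient $\mathbb{P}^1 = \mbox{Fix}(\z_0)$; choosing the coordinate $X$ so that these are $0$ and $\infty$ gives $\e(X) = -X$. The eight Weierstrass points of $\X$ (the branch points of $X : \X \to \mathbb{P}^1$) form an $\e$-stable set, and because the lift of $\e$ is an honest involution with genus $1$ quotient $E_1 = k(X^2, Y)$, neither of the fixed points $0, \infty$ lies among them. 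The branch locus therefore decomposes into four $\e$-orbits $\{\a_i, -\a_i\}$ with $\a_i \neq 0$, which is precisely the factorization
\[ Y^2 = (X^2 - \a_1^2)(X^2 - \a_2^2)(X^2 - \a_3^2)(X^2 - \a_4^2) \]
recorded above.

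Next I would expand this product as a polynomial in $X^2$. Its coefficients are, up to sign, the elementary symmetric functions of $\a_1^2, \dots, \a_4^2$, namely the quantities $s_1, s_2, s_3, s_4$ introduced above, so that $Y^2 = X^8 + s_1 X^6 + s_2 X^4 + s_3 X^2 + s_4$; here the condition $s_4 \neq 0$ is equivalent to each $\a_i \neq 0$.

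To reach \eqref{eq3} I would then normalize $s_4$ by a single rescaling. Applying $X \mapsto \l X$ together with $Y \mapsto \l^4 Y$ preserves the shape of the equation and replaces $s_4$ by $\l^{-8} s_4$; since $k$ is algebraically closed one may choose $\l$ with $\l^8 = s_4$, making the constant term equal to $1$. This substitution commutes with $\e : X \mapsto -X$, so the condition $\e(X) = -X$ is retained and $\e$ is still the chosen non-hyperelliptic involution. Writing $a, b, c$ for the three resulting coefficients gives the asserted equation.

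Finally, for the discriminant I would use that $\X$ is smooth of genus $3$, so the eight roots $\pm\a_1, \dots, \pm\a_4$ of the right-hand side are distinct; equivalently the discriminant is nonzero. Distinctness is precisely $\a_i \neq 0$ (already secured by $s_4 \neq 0$) together with $\a_i^2 \neq \a_j^2$ for $i \neq j$, the latter because the $\a_i^2$ are the distinct branch points of the hyperelliptic cover after the substitution $u = X^2$. The only step that requires real care is the claim in the first paragraph that $0$ and $\infty$ are not branch points --- equivalently that the lift of $\e$ is an involution with elliptic quotient rather than an order $4$ element --- but this is exactly the content established in the discussion preceding the lemma and may simply be invoked here.
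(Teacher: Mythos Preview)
Your proposal is correct and follows the same route as the paper, which in fact gives no separate proof: the lemma is stated as a summary of the discussion immediately preceding it (choice of $X$ with $\e(X)=-X$, the factorization $\prod (X^2-\a_i^2)$, the expansion in the $s_i$, and the remark that one can normalize to $s_4=1$). Your write-up makes explicit the rescaling $X\mapsto \l X$, $Y\mapsto \l^4 Y$ with $\l^8=s_4$ and the smoothness $\Leftrightarrow$ nonzero discriminant step, both of which the paper leaves to the reader.

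One small wording point: in your first paragraph you invoke that the lift of $\e$ has \emph{genus $1$} quotient, but the hypothesis of the lemma is only that some non-hyperelliptic involution exists in $\Aut(\X)$, and its quotient could equally well be the genus $2$ field $k(X^2,YX)$. What you actually need (and use) is only that the lift has order $2$ rather than $4$; this alone forces the branch polynomial to be even in $X$, hence $0,\infty$ are not Weierstrass points. You flag exactly this issue in your final paragraph, so the logic is fine---just drop the ``genus $1$'' qualifier in the first paragraph to avoid the appearance of circularity.
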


Indeed, the non-hyperelliptic involution above is an elliptic involution and $\X$ is bielliptic.  There is another non-hyperelliptic involution of $\X$, as noted above, namely $\z_2:=\z_0\, \z_1$ which fixes a genus 2 field.  See \cite{sh2} for the equation of this genus 2 subfield and the arithmetic of such curves. hence, we have the following result; see \cite{BC} or \cite{sh2} for details. 

\begin{proposition}
$[C] \in \S $ if and only if $C$ is a double covering of a genus 2 curve.
\end{proposition}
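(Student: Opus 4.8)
The plan is to reduce the statement to the internal structure of the involutions on $C$, where throughout $C$ denotes a genus 3 hyperelliptic curve with hyperelliptic involution $\z_0$; recall that $\z_0$ is central in $\Aut(C)$. First I would classify the involutions of $C$ by Riemann--Hurwitz: if $\sigma$ is an involution and $C/\langle\sigma\rangle$ has genus $h$, then the number of fixed points of $\sigma$ equals $8-4h$, so $h\in\{0,1,2\}$. The case $h=0$ (with $8=2g+2$ fixed points) is the hyperelliptic involution $\z_0$ itself; $h=1$ gives an elliptic involution; and $h=2$ gives an unramified double cover of a genus 2 curve. Thus $C$ is bielliptic exactly when it possesses a non-hyperelliptic involution with $h=1$.

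The key point is that the non-hyperelliptic involutions occur in pairs, one of each type. Given a non-hyperelliptic involution $\e$, the centrality of $\z_0$ produces a Klein four-group $V=\{1,\z_0,\e,\z_0\e\}$. Since $C/\langle\z_0\rangle\iso\bP^1$ and any quotient of $\bP^1$ by a finite automorphism group is again $\bP^1$, the quotient $C/V$ has genus $0$. I would then invoke the genus formula for a Klein four-group (equivalently, compare Riemann--Hurwitz along the tower $C\to C/\langle\z_0\rangle\to C/V$): it yields $g(C/\langle\e\rangle)+g(C/\langle\z_0\e\rangle)=g(C)+2\,g(C/V)=3$. As each summand lies in $\{1,2\}$, one of them is $1$ and the other is $2$. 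Hence in every such pair $\{\e,\z_0\e\}$ exactly one member is elliptic and the other realizes $C$ as a double cover of a genus 2 curve.

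Both implications now follow at once. If $[C]\in\S$ then $C$ is bielliptic, so it has an elliptic involution $\e$ with $g(C/\langle\e\rangle)=1$; its partner $\z_0\e$ then satisfies $g(C/\langle\z_0\e\rangle)=2$, exhibiting $C$ as a double cover of a genus 2 curve. Conversely, suppose $\pi\colon C\to C'$ is a degree 2 morphism onto a genus 2 curve $C'$. Being of degree 2 it is Galois, with covering involution $\tau$ satisfying $C/\langle\tau\rangle\iso C'$ and $g(C/\langle\tau\rangle)=2$; in particular $\tau\neq\z_0$, so $\tau$ is non-hyperelliptic. By the previous paragraph its partner $\z_0\tau$ has quotient genus $1$, hence is an elliptic involution, so $C$ is bielliptic and $[C]\in\S$. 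Alternatively, both directions are transparent from the normal form \eqref{eq3}, where $E_1=k(X^2,Y)$ and the genus 2 field $k(X^2,YX)$ are present simultaneously.

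The main obstacle is the genus identity $g(C/\langle\e\rangle)+g(C/\langle\z_0\e\rangle)=3$, not merely the membership of each term in $\{1,2\}$: it is precisely this relation that excludes the a priori possibilities $\{1,1\}$ and $\{2,2\}$ and forces the elliptic quotient and the genus 2 quotient to appear together. I expect to obtain it most cleanly from the vanishing $g(C/V)=0$, which is the only nonroutine input; everything else is a direct application of Riemann--Hurwitz together with the centrality of the hyperelliptic involution.
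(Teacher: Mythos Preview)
Your argument is correct. The genus identity $g(C/\langle\e\rangle)+g(C/\langle\z_0\e\rangle)=g(C)+2g(C/V)$ is the standard Accola/Kani--Rosen relation for a Klein four-group, and your derivation of $g(C/V)=0$ from the fact that $C/V$ is a further quotient of $C/\langle\z_0\rangle\iso\bP^1$ is exactly the right input. Combined with the uniqueness of the hyperelliptic involution (forcing each summand into $\{1,2\}$), this pins down the pair as $\{1,2\}$ and both implications follow cleanly.

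The paper, however, does not argue this way. It proceeds through the explicit normal form of Lemma~1: once $C$ is written as $Y^2=X^8+aX^6+bX^4+cX^2+1$, the two degree-$2$ subfields $k(X^2,Y)$ and $k(X^2,YX)$ are visibly of genus $1$ and $2$, and the proposition is then deferred to \cite{BC} and \cite{sh2} rather than proved in the text. So the paper's route is computational and citation-based, while yours is intrinsic and self-contained. Your approach has the advantage of explaining \emph{why} the elliptic and genus-$2$ quotients must occur together (the constraint $1+2=3$ is forced, not coincidental), and it generalizes immediately to other genera; the paper's normal-form approach, on the other hand, gives the explicit equations of both quotient curves at once, which is what the rest of the paper actually uses. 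Your closing remark that the normal form \eqref{eq3} makes both directions transparent is in fact the entirety of the paper's own argument.
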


The above  conditions  determine $X$ up to coordinate change by the group $\< \tau_1, \tau_2\>$ where  
\[ \tau_1: X\to \zz_8 X, \quad \textit{and } \quad   \tau_2: X\to \frac 1 X,\] and  $\zz_8$ is a primitive 8-th root of unity in  $k$. 
Hence,  
\[\tau_1 : \, (a, b, c) \to (\zz_8^6 a, \zz_8^4 b, \zz^2 c ),\]  
and 
\[ \tau_2 : \, (a, b,  c) \to (c, b, a) .\]
Then, $| \tau_1 | =4$ and $|\tau_2 | =2$.  The group generated by $\tau_1$ and $\tau_2$ is the dihedral group of order 8.   Invariants of this action are
\begin{equation}\label{eq2}
\begin{split}
\s_2  & =\,a\,c,  \\
 \s_3  & =(a^2+c^2)\,b,  \\
  \s_4  & =a^4+c^4, 
\end{split}
\end{equation}
since 
\[ 
\begin{split}
& \tau_1 (a^4+c^4) = (\zz_8^6 a)^4 + (\zz_8^2 c)^4 = a^4 + c^4 \\
& \tau_1 \left(   (a^2 + c^2 ) b \right) = \left( \zz_8^4 a^2 + \zz_8^4 c^2   \right) \cdot ( \zz_8^4 b) = (a^2+c^2) b \\
& \tau_1 ( a c ) =  \zz_8^6 a \cdot \zz_8^2 c =  a c \\
\end{split}
\] 
Since they are symmetric in $a$ and $c$, then they are obviously invariant under $\t_2$. 
Notice that $\s_2, \s_3, \s_4$ are homogenous polynomials of degree 2, 3, and 4 respectively.  The subscript $i$ represents the degree of the polynomial $\s_i$.

Since the above transformations are automorphisms of the projective line $\mathbb P^1 (k)$ then the $SL_2(k)$ invariants must be expressed in terms of $\s_4, \s_3$, and $\s_2$. 
In these parameters, the discriminant of the octavic polynomial on the right hand side of  Eq.~ \eqref{eq3} equals $- \frac {256} {(\s_4 + 2 \s_2^2)^4} \,  \Delta^2 $, where
%
\begin{equation}\label{D_3}
\begin{split}
\Delta  & =     132 {\s_2}^4 \s_4  -18 {\s_4}^2 \s_2 \s_3-72 \s_4 {\s_2}^3 \s_3-\s_4 {\s_2}^2 {\s_3}^2+80 \s_2 {\s_3}^2\s_4-576 \s_3 {\s_2}^2\s_4\\
& -256 {\s_4}^2+768 \s_4 {\s_2}^3-1024 \s_4{\s_2}^2+256 {\s_2}^2{\s_3 }^2-576 {\s_2}^4\s_3+768 {\s_2}^5+24 {\s_2}^{6}\\
& -16 {\s_3}^4 -1024 {\s_2}^4+128 {\s_3}^2\s_4+192 {\s_4}^2\s_2+114 {\s_4}^2{\s_2}^2+4 {\s_4}^2{\s_2}^3-144 {\s_4}^2\s_3\\
& +16 \s_4 {\s_2}^5-72 {\s_2}^5 \s_3-2 {\s_2}^4 {\s_3}^2+160 {\s_2}^3{\s_3}^2+4 {\s_3}^3\s_4+8 {\s_3}^3{\s_2}^2+27 {\s_4}^3+16 {\s_2}^{7}
\end{split}
\end{equation}
%
%
\noindent The map \[(a, b, c) \mapsto (\s_2, \s_3, \s_4)\] is a branched Galois covering with group $D_4$  of the set 
\[ \{  (\s_2, \s_3, \s_4)\in k^3 : \Delta_{(\s_2, \s_3, \s_4)} \neq 0\}\]
by the corresponding open subset of $a, b, c$-space. In any case, it is true that if $a, b, c$ and $a', b', c'$ have the same $\s_2, \s_3, \s_4$-invariants then they are conjugate under $\< \tau_1, \tau_2\>$.

The case when $\s_3 =0$ must be treated separately.  We have two sub cases $a^2+c^2=0$ or $b=0$.  
Then we define new invariants as follows:
\begin{equation} \label{def_u}
\begin{split}
\p (\X_3) \, = \left\{ \aligned & w=b^2 & \mathrm{if} \quad a=c=0,\\
 & (\s_2, w, \s_4) & \mathrm{if} \quad a^2+c^2=0\  \mathrm{and} \, \, b\neq 0,\\
 & (\s_2, \s_3, \s_4) & \mathrm{otherwise}. \\ \endaligned
\right.
\end{split}
\end{equation}
%
The invariants $\s_2, \s_3, \s_4, \dots $ are valid for any genus $g \geq 2$ and are called by many authors \textbf{dihedral invariants}.  They were discovered by the second author in his PhD thesis and appeared for the first time in the literature in Shaska/V\"olklein \cite{sh-v}.   Then, they appeared for genus $g=3$ in \cite{issac, g-sh-s} and were generalized for every genus  in \cite{g-sh}. They were  generalized a ditto to all cyclic curves by Antoniadis/Kontogiorgis \cite{AK}. In \cite{ritz2} a projective version of these dihedral invariants are called \textit{dihedral arithmetic invariants}. 

\begin{lem}\label{lemma1}  For $(a, b, c) \in k^3$ with $\Delta\neq 0$, equation \eqref{eq3} defines a  genus 3 hyperelliptic  field  $K_{a, b, c}=k(X,Y)$. 
Its reduced automorphism group contains the elliptic involution $\e_{a, b, c}: X \mapsto -X$. Two such  pairs $(K_{a, b, c}, \e_{a, b, c})$ and  $(K_{a', b', c'}, \e_{a', b', c''})$ are isomorphic if and only if 
\[ \left( \s_2, \s_3, \s_4 \right) = \left( \s_2', \s_3', \s_4' \right) \] 
where $\s_2, \s_3, \s_4$ and $\s_4',\s_3', \s_2'$ are dihedral invariants associated with $a, b, c$ and $a', b', c'$, respectively.
\end{lem}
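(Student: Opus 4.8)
The plan is to establish the three assertions of Lemma~\ref{lemma1} in turn, treating the field-theoretic and automorphism claims quickly and reserving the real effort for the isomorphism criterion. For the first assertion, equation \eqref{eq3} defines a hyperelliptic field of genus $3$ exactly when the octavic $f(X)=X^8+aX^6+bX^4+cX^2+1$ is separable; by the discriminant formula recorded just above (the discriminant of $f$ equals $-\frac{256}{(\s_4+2\s_2^2)^4}\,\Delta^2$), this separability is equivalent to $\Delta\neq 0$, and a separable octavic produces a smooth hyperelliptic curve with $2g+2=8$ branch points, hence $g=3$. For the second assertion I would use that $f$ is even in $X$, so $f(-X)=f(X)$ and the assignment $X\mapsto -X$, $Y\mapsto Y$ extends to a $k$-automorphism $\e$ of $K_{a,b,c}=k(X,Y)$; it is an involution, nontrivial modulo $\z_0$, and the genus~$1$ subfield $E_1=k(X^2,Y)$ exhibited earlier is its fixed field, so $\e$ is an elliptic involution in $\bG$.

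For the classification, the easy implication is that equal invariants give isomorphic pairs. Here I would invoke the fact recorded before the lemma that $\s_2,\s_3,\s_4$ generate the invariants of the $D_4$-action $\<\tau_1,\tau_2\>$ on $(a,b,c)$-space and separate its orbits: if $(\s_2,\s_3,\s_4)=(\s_2',\s_3',\s_4')$, then $(a',b',c')$ lies in the $\<\tau_1,\tau_2\>$-orbit of $(a,b,c)$. Since $\tau_1$ and $\tau_2$ are induced by the coordinate changes $X\mapsto\zz_8 X$ and $X\mapsto 1/X$, each of which is a $k$-isomorphism of the associated fields carrying the involution $X\mapsto -X$ to itself, the pairs $(K_{a,b,c},\e)$ and $(K_{a',b',c'},\e')$ are isomorphic.

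The substantive direction is the converse. Suppose $\alpha\colon K_{a,b,c}\to K_{a',b',c'}$ is a $k$-isomorphism with $\e'=\alpha\e\alpha^{-1}$. Because the hyperelliptic involution $\z_0$ is central and the degree-$2$ genus-$0$ subfield is unique, $\alpha$ carries $k(X)$ to $k(X')$ and hence descends to a Möbius transformation $\bar\alpha$ of the projective line. The conjugation condition forces $\bar\alpha$ to intertwine the two copies of the involution $X\mapsto -X$, so $\bar\alpha$ either fixes or interchanges the fixed-point pair $\{0,\infty\}$ of $\e$. Imposing that $\bar\alpha$ carry the normalized equation \eqref{eq3} to another of the same shape (even octavic with leading coefficient and constant term equal to $1$), the first alternative forces $X\mapsto\lambda X$ with $\lambda^8=1$, a power of $\tau_1$ (with $\lambda=-1$ recovering $\e$ itself), while the second forces $X\mapsto\mu/X$, namely $\tau_2$ up to such a scaling. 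Thus $\alpha$ is realized by an element of $\<\tau_1,\tau_2\>$, under which $\s_2,\s_3,\s_4$ are invariant, giving $(\s_2,\s_3,\s_4)=(\s_2',\s_3',\s_4')$.

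The main obstacle, I expect, is precisely the bookkeeping in this last step: one must verify that the normalization conditions (placing the fixed points of $\e$ at $0$ and $\infty$ and clearing the equation to the form \eqref{eq3}) pin down the coordinate $X$ \emph{exactly} up to $\<\tau_1,\tau_2\>$ and not a larger group, equivalently that the branched Galois covering $(a,b,c)\mapsto(\s_2,\s_3,\s_4)$ genuinely has group $D_4$ as claimed. This amounts to a finite but careful check of which Möbius transformations simultaneously preserve the involution and the shape of the octavic, together with the corresponding rescalings of $Y$; the case $\s_3=0$ flagged in \eqref{def_u} is where one must be most attentive, since there the orbit structure degenerates.
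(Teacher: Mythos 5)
Your proof is correct and takes essentially the same approach as the paper: the paper's (much terser) argument likewise derives necessity from the remark that the normalization in Eq.~\eqref{eq3} together with $\e(X)=-X$ pins the coordinate $X$ down exactly up to $\<\tau_1,\tau_2\>$, and dismisses sufficiency as clear since $\s_2,\s_3,\s_4$ are invariant under that group. You merely make explicit the supporting facts (the discriminant computation, the M\"obius-transformation bookkeeping, and the orbit-separation property of the invariants) that the paper states just before the lemma and then cites.
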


\proof An isomorphism $\a$ between these two pairs yields $K=k(X,Y)=k(X', Y')$ with $ k(X)=k(X')$ such that $X,Y$ satisfy \eqref{eq3} and  $X',Y'$ satisfy the corresponding equation with $a, b, c$
replaced by $a', b', c'$. Further, $\e_{a, b, c}(X')=-X'$. Thus $X'$ is conjugate to $X$ under $\< \tau_1, \tau_2\>$ by the above remarks.
This proves the condition is necessary. It is clearly sufficient.

\qed

\begin{rem}
If $(2\s_4+\s_2^2) =0$, then this implies that $a=c=0$.  In this case the equation of the curve becomes \[ Y^2= X^8 + b X^4 + 1,\]  which corresponds to the curves with automorphism group $\Z_2 \times D_8$, (cf. Eq.~\eqref{Z2xD8})
\end{rem}

\noindent We have the following theorem

\begin{thm}\label{sect1_thm} Let $(\s_2, \s_3, \s_4) \in k^3\setminus\{ \D =0\}$.  Then the following hold:

i) The ordered triples $(\s_2, \s_3, \s_4)$  bijectively parameterize the isomorphism classes of pairs $(K,\e)$ where $K$ is a genus 3 hyperelliptic field and $\e$ an elliptic involution of $\Aut (K)$. The j-invariant of the  elliptic subfield of $K$ associated with $\e$ is given by  
\begin{equation} \label{j_eq}
j  = \frac {64} M \cdot \frac { (-4 \s_3^2-48 \s_4-24 \s_2^2+3 \s_2^3+6 \s_4 \s_2)^3} { (2 \s_4+\s_2^2) },  
\end{equation}
where 
\[
\begin{split}
M= & \,  \, 66 \s_4 \s_2^4  -2048 \s_4^2-512 \s_2^4-2048 \s_4 \s_2^2-128 \s_3^4+1024 \s_3^2 \s_4+512 \s_3^2 \s_2^2+228 \s_4^2 \s_2^2\\
  & +768 \s_4^2 \s_2+216 \s_4^3+\s_2^7+3 \s_2^6+4 \s_2^3 \s_4^2+4 \s_2^5 \s_4-\s_3^2 \s_2^4+768 \s_4 \s_2^3+160 \s_3^2 \s_2^3\\
  & +192 \s_2^5-2 \s_3^2 \s_4 \s_2^2+320 \s_3^2 \s_4 \s_2-72 \s_4^2 \s_3 \s_2-72 \s_4 \s_2^3 \s_3-1152 \s_4 \s_2^2 \s_3+32 \s_3^3 \s_4\\
  & -1152 \s_4^2 \s_3-288 \s_2^4 \s_3 +16 \s_3^3 \s_2^2   -18 \s_2^5 \s_3  
\end{split}
\]

ii) There is another involution $\z_0 \e \in \Aut (K)$ which fixes a genus 2 curve $\X_2$ with equation \[ Y^2=X(X^4+aX^3+bX^2+cX+1.\]
The isomorphism class of $\X_2$ is determined uniquely by the triple $(\s_2, \s_4, \s_4)$ as in Eq.~\eqref{i1_i2_i3}.

iii) The triples $(\s_2, \s_3, \s_4)$    parametrize the isomorphism classes of genus 3 hyperelliptic fields with $V_4 \emb \Aut (K)$. \\

\end{thm}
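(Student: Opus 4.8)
The plan is to treat the three parts in turn, using Lemma~\ref{lemma1} as the backbone for every parametrization statement and reserving the displayed formulas for direct computation. For part i), the bijection is almost immediate from what precedes: Lemma~\ref{lemma1} already gives that two pairs $(K,\e)$ are isomorphic exactly when their dihedral invariants agree, which is injectivity of the map from isomorphism classes to triples, while surjectivity onto $k^3\setminus\{\D=0\}$ is the branched Galois covering statement recorded after Lemma~\ref{lemma1}, since every admissible triple is the image of some $(a,b,c)$ with $\D\neq 0$. The substantive content is the $j$-invariant. First I would pass to the elliptic subfield $E_1=k(X^2,Y)$ attached to $\e$: setting $U=X^2$ in Eq.~\eqref{eq3} presents $E_1$ as the genus~$1$ curve $Y^2=U^4+aU^3+bU^2+cU+1$. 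Next I would compute the two classical $SL_2(k)$-invariants $I$ and $J$ of this binary quartic and form the $j$-invariant as the suitably normalized ratio $I^3/(4I^3-J^2)$. The last step is to rewrite this in dihedral invariants: since $\e(X)=-X$ realizes $\tau_1$ as the coordinate change $U\mapsto \zz_8^2\,U$, one checks that $I$ is $\<\tau_1,\tau_2\>$-invariant while $J\mapsto -J$ under $\tau_1$, so $I^3$ and $J^2$ descend; substituting $\s_2=ac$ and $b^2=\s_3^2/(\s_4+2\s_2^2)$ (the latter from $\s_3=(a^2+c^2)b$ and $(a^2+c^2)^2=\s_4+2\s_2^2$) turns $I$ and $J^2$ into rational functions of $\s_2,\s_3,\s_4$, and clearing denominators yields Eq.~\eqref{j_eq}. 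That the output must be a rational function of the triple is guaranteed a priori by Lemma~\ref{lemma1}, so this is bookkeeping rather than discovery.

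For part ii), I would identify the second quotient explicitly. The involution $\z_0\e$ acts by $(X,Y)\mapsto(-X,-Y)$, so its fixed field is $C=k(X^2,XY)$; writing $V=X^2$ and $W=XY$ gives $W^2=V\,(V^4+aV^3+bV^2+cV+1)$, which is the genus~$2$ curve $\X_2$ of the statement. Its isomorphism class is again an isomorphism invariant of the pair $(K,\e)$, hence by Lemma~\ref{lemma1} a function of $(\s_2,\s_3,\s_4)$ alone; to make this effective I would compute the genus~$2$ invariants $i_1,i_2,i_3$ of $\X_2$ and express them in the dihedral invariants, which is precisely the content of Eq.~\eqref{i1_i2_i3}.

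For part iii), the point is to recognize the $V_4$-locus as the locus of fields carrying an elliptic involution. If $V_4\emb\Aut(K)$, then the corresponding Klein four-group contains the central $\z_0$ together with two non-hyperelliptic involutions $\e,\z_0\e$; by parts i) and ii) one of these has a genus~$1$ quotient and the other a genus~$2$ quotient, so a genuine elliptic involution is always present. Conversely an elliptic involution $\e$ generates, together with $\z_0$, a copy of $V_4$. Hence the fields with $V_4\emb\Aut(K)$ are exactly those underlying a pair $(K,\e)$ from part i), and the triples parametrize this locus.

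The main obstacle I anticipate is purely computational: carrying out the quartic-invariant and genus~$2$-invariant calculations and reducing them cleanly to the displayed rational expressions, in particular matching the precise numerical constants and the exact denominator factor appearing in Eq.~\eqref{j_eq} (different normalizations of $I,J$ shift the coefficients of the $\s_2$-terms). A secondary point requiring care is the passage from pairs to fields in part iii): for curves whose automorphism group is strictly larger than $V_4$ there may be several elliptic involutions, so the correspondence is bijective only on the generic stratum and should be read as surjective onto the $V_4$-locus in general.
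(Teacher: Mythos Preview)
Your proposal is correct and follows essentially the same approach as the paper: for i) you pass to the quartic model $Y^2=U^4+aU^3+bU^2+cU+1$ of the elliptic subfield and substitute the dihedral relations, for ii) you identify the fixed field of $\z_0\e$ explicitly as $W^2=V(V^4+aV^3+bV^2+cV+1)$ and compute its invariants, and for iii) you reduce to the equivalence between possessing an elliptic involution and containing a Klein four-group. The paper's proof is terser (it simply says ``compute $j$ in terms of $a,b,c$ and use the substitutions \eqref{subs}'' for i), cites Riemann--Hurwitz and \cite{sh2} for ii), and declares iii) a consequence of the first two parts), so your added detail---the $I,J$ invariants of the quartic, the explicit action $(X,Y)\mapsto(-X,-Y)$, and the caveat about non-injectivity on the larger-automorphism strata---is a faithful elaboration rather than a different route.
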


\proof   
i) The automorphism $\e \in \Aut (K) $ fixes a degree 2 elliptic subfield $E$ which has equation \[ Y^2 = x^4+ ax^3+bx^2 + cx +1 \] and $j$-invariant given in terms of $a$, $b$, and $c$.  Using substitutions in  Eq.~\eqref{subs} we get $j (E)$ as in Eq.\eqref{j_eq}.

ii)  The quotient $\X/\< \z_0 \e\> $ has genus 2.  This follows straight from the Riemann-Hurwitz formula.  The equation of this genus 2 curve is as claimed; see \cite{sh2}.  The isomorphism class of this genus 2 curve is determined by the absolute invariants $(i_1, i_2, i_3)$.  In terms of the $\s_2, \s_3, \s_4$ they have the following expressions
%
\begin{equation}\label{i1_i2_i3}
\begin{split}
i_1  = &   \frac {9 (2 \s_1+\s_3^2) }  {D^2} (\s_3^4 - 80 \s_3^2-72 \s_2^2 -2 \s_3^2 \s_1 - 24 \s_1 \s_2 - 12 \s_3^2 \s_2 + 2 \s_3^3 - 160 \s_1 + 4 \s_3 \s_1) \\ \\
i_2  = & \frac { 27 (2 \s_1+\s_3^2)^2}  {D^3} \left(2 \s_3^3 \s_1 -1116 \s_1 \s_2+\s_3^5-2240 \s_3^2 +162 \s_2^2 \s_3 +864 \s_2^2 +216 \s_1^2 \right. \\
& \left. +114 \s_3^2 \s_1+3 \s_3^4-558 \s_3^2 \s_2-4480 \s_1+624 \s_3^3 -18 \s_3^3 \s_2-36 \s_1 \s_2 \s_3+1248 \s_3 \s_1 \right)  \\ \\
i_3  =  & \frac {243} {1024} \, \frac { (2 \s_1+\s_3^2)^3 }  {  D^5  } \left(\s_3^7 -128 \s_2^4-2048 \s_1^2+768 \s_3^3 \s_1  -2048 \s_3^2 \s_1+192 \s_3^5-512 \s_3^4 \right.\\
& +216 \s_1^3+3 \s_3^6-72 \s_1^2 \s_3 \s_2+320 \s_1 \s_2^2 \s_3-72 \s_1 \s_3^3 \s_2 -1152 \s_1 \s_3^2 \s_2-2 \s_2^2 \s_1 \s_3^2  \\
& -1152 \s_1^2 \s_2+1024 \s_1 \s_2^2+160 \s_2^2 \s_3^3+512 \s_2^2 \s_3^2-18 \s_3^5 \s_2-288 \s_3^4 \s_2  +768 \s_3 \s_1^2 \\
& \left. +4 \s_3^3 \s_1^2 +4 \s_3^5 \s_1 +228 \s_3^2 \s_1^2+66 \s_3^4 \s_1+16 \s_2^3 \s_3^2+32 \s_2^3 \s_1-\s_2^2 \s_3^4 \right), \\
\end{split}
\end{equation}
%
where $D= -20 \s_1-10 \s_3^2+2 \s_3^3+4 \s_3 \s_1-3 \s_2^2$.

iii) This is a straight consequence of the first two parts. The cases when $| \Aut (K) | > 4$ are treated in Thm.~\ref{thm_last}.
\qed



\section{The locus $\S$ of genus 3 hyperelliptic curves with elliptic involutions}

In this section, first  we briefly define the    invariants of binary octavics.    We will use interchangeably the terms genus 3 hyperelliptic curve and genus 3 hyperelliptic field.  There is a one to one equivalence between the isomorphic classes of genus 3 hyperelliptic curves and projective classes of equivalence of binary octavics.  Thus, we have to describe some basic properties of binary octavics.  The following material can be found on works of classical algebraic geometers; see Alagna \cite{Al}, van Gall \cite{vG}, et al or for a modern version one can check \cite{dolga_book}.  

The ring of invariants of binary octavics was also studied by Shioda \cite{shi1}.  However, the syzygies among such $SL_2 (k)$-invariants described in the Shioda's paper seem to be incorrect.  In \cite{hyp_mod_3} such $SL_2 (k)$-invariants $J_2, \dots , J_8$ are redefined  and  the algebraic relations among them determined.  Furthermore, $GL_2(k)$-invariants $t_1, \dots , t_6$ are defined and relation among them determined.  Throughout this paper we will make use of these $GL_2 (k)$-invariants and therefore follow  definitions from \cite{hyp_mod_3}. 

Let $f(X,Y)$ be the binary octavic
$$f(X,Y) = \sum_{i=0}^8 a_i X^i Y^{8-i}.$$
defined over an algebraically closed field $k$.  We define the following covariants:
\begin{equation}
\begin{split}
&g=(f,f)^4, \quad k=(f, f )^6, \quad h=(k,k)^2, \\
& m=(f,k)^4, \quad  n=(f,h)^4, \quad p=(g,k)^4, \quad q=(g, h)^4,\\
\end{split}
\end{equation}
where the operator $( \cdot    , \cdot )^n$ denotes the $n$-th transvection of two binary forms; see \cite{hyp_mod_3} among many other references.

Then, the following 
\begin{equation}\label{def_J}
\begin{aligned}
&     J_2= 2^2 \cdot 5 \cdot 7 \cdot (f,f)^8,                    &   \qquad   &  J_3= \frac 1 3 \cdot  2^4 \cdot 5^2 \cdot 7^3 \cdot (f,g )^8, \\
&  J_4= 2^9 \cdot 3 \cdot 7^4 \cdot (k,k)^4,                      &    \qquad      &   J_5= 2^9 \cdot 5 \cdot 7^5 \cdot (m,k)^4,  \\
&   J_6 = 2^{14} \cdot 3^2 \cdot 7^6 \cdot (k,h )^4,              &    \qquad      &      J_7= 2^{14} \cdot 3 \cdot 5 \cdot 7^7 \cdot (m,h )^4,  \\
& J_8= 2^{17} \cdot 3 \cdot 5^2 \cdot 7^9 \cdot   (p,h)^4,        &    \qquad       &   J_9= 2^{19} \cdot 3^2 \cdot 5 \cdot 7^9 \cdot   (n,h)^4, \\
&  J_{10}=   2^{22} \cdot 3^2 \cdot 5^2 \cdot 7^{11} (q,h)^4      &  \qquad    &       \\
\end{aligned}
\end{equation}
are $SL_2(k)$- invariants; see \cite{hyp_mod_3} for details.     The following is a classical fact of invariant theory of binary forms.

\begin{lem}
Two binary forms $f(X, Y)$ and $f^\prime (X, Y)$ are projectively equivalent via  a matrix $M \in GL_2 (k)$ if and only if 
\[ J_i (f) = \lambda^i J_i (f^\prime), \quad where \quad \lambda = \left( \det (M) \right)^4 \]
\end{lem}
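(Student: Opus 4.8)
The plan is to split the equivalence into an easy necessity direction, handled by a weight computation, and a substantive sufficiency direction, handled by invoking completeness of the invariant ring together with the separation of orbits of stable forms.

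\textbf{Necessity.} First I would record the homogeneity data of each $J_i$. Inspecting Eq.~\eqref{def_J}, each $J_i$ is built from iterated transvectants of $f$ and its covariants in such a way that it is homogeneous of degree $i$ in the coefficients $a_0,\dots,a_8$; this is precisely why the subscript records the degree (for instance $g=(f,f)^4$ is degree $2$, so $J_3=\frac13\cdot 2^4\cdot 5^2\cdot 7^3\,(f,g)^8$ is degree $3$). Since $f$ has order $n=8$, an invariant homogeneous of degree $d$ in the coefficients is isobaric of weight $w=nd/2=4d$, so $J_i$ has weight $4i$. To convert this into the transformation law under the full group I would factor any $M\in GL_2(k)$ as $M=c\,S$ with $S\in SL_2(k)$ and $c\in k$ a square root of $\det M$, which exists because $k$ is algebraically closed. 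As $f$ is homogeneous of degree $8$, the scalar part pulls out as $M\cdot f=c^{8}\,(S\cdot f)$, and since $J_i$ is homogeneous of degree $i$ in the coefficients and is $SL_2(k)$-invariant,
\[ J_i(M\cdot f)=c^{8i}\,J_i(S\cdot f)=c^{8i}\,J_i(f)=(\det M)^{4i}\,J_i(f)=\lambda^{i}\,J_i(f), \]
with $\lambda=(\det M)^4$. Thus $GL_2(k)$-equivalence forces the stated relations, the opposite direction of the action merely replacing $M$ by $M^{-1}$ and $\lambda$ by $\lambda^{-1}$.

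\textbf{Sufficiency.} Conversely, suppose $J_i(f)=\lambda^i\,J_i(f')$ for all $i$ and some nonzero $\lambda\in k$. Choosing $c\in k$ with $c^8=\lambda$ and replacing $f'$ by the $GL_2(k)$-equivalent form $(c\,I)\cdot f'$ scales each $J_i(f')$ by $\lambda^i$, so after this harmless normalization I may assume $\lambda=1$, i.e. $J_i(f)=J_i(f')$ for every $i$. Now I would invoke the classical fact, reproved in \cite{hyp_mod_3} (cf. \cite{shi1}), that the $J_i$ generate the full ring of $SL_2(k)$-invariants of binary octavics; consequently $I(f)=I(f')$ for \emph{every} $SL_2(k)$-invariant $I$. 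Finally, since the binary forms parametrizing smooth genus $3$ hyperelliptic curves have distinct roots and are therefore stable in the sense of geometric invariant theory, their $SL_2(k)$-orbits are closed; as the invariants of a reductive group separate closed orbits, $f$ and $f'$ lie in the same $SL_2(k)$-orbit and in particular are $GL_2(k)$-equivalent. Tracing the determinant back through the necessity direction then forces $(\det M)^4=\lambda$.

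\textbf{Main obstacle.} The weight bookkeeping in the necessity direction is routine. The real content is the sufficiency direction, which rests on two non-elementary inputs: that the finitely many invariants $J_2,\dots,J_{10}$ already generate the whole invariant ring (so that their agreement forces agreement of all invariants), and that invariants separate orbits for the forms at hand. I would treat both as established in \cite{hyp_mod_3,shi1}. The one point genuinely requiring care is the implicit stability hypothesis: for non-stable (highly degenerate) octavics the invariants no longer separate orbits, so the statement must be understood as restricted to forms of nonzero discriminant, which is exactly the regime relevant to the moduli of curves.
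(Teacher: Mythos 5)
The paper offers no proof of this lemma at all: it is stated as ``a classical fact of invariant theory of binary forms'' and immediately used, so there is no argument of the authors' to compare yours against step by step; what matters is whether your reconstruction is sound, and it is. Your necessity direction is the standard degree/weight bookkeeping: each $J_i$ in Eq.~\eqref{def_J} really is homogeneous of degree $i$ in the coefficients (the auxiliary covariants $g,k,h,m,n,p,q$ have coefficient-degrees $2,2,4,3,5,4,6$, so every $J_i$ has degree equal to its subscript), hence isobaric of weight $4i$, which gives exactly $J_i(M\cdot f)=(\det M)^{4i}J_i(f)=\lambda^i J_i(f)$. Your sufficiency direction --- absorb $\lambda$ into a scalar matrix, invoke that $J_2,\dots,J_{10}$ generate the ring of $SL_2(k)$-invariants of binary octavics (Shioda's generation theorem, as reused in \cite{hyp_mod_3}), and then separate the closed $SL_2(k)$-orbits of nonzero-discriminant (hence stable) forms by invariants --- is the correct and essentially unavoidable route, and your identification of the generation statement plus orbit separation as the real content is accurate. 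Two points you raise deserve to stay explicit: first, the lemma as literally stated fails for degenerate octavics (two inequivalent nullforms such as $X^8$ and $X^7Y$ have all $J_i=0$), so the nonzero-discriminant restriction you impose is genuinely needed and is only implicit in the paper, which applies the lemma exclusively to smooth curves; second, recovering $\lambda=(\det M)^4$ at the end requires that not all $J_i$ vanish, which again holds in the nondegenerate regime by the paper's subsequent lemma asserting that $J_2=\cdots=J_7=0$ forces a multiple root.
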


The following technical result is helpful for the rest of the paper; see \cite[Lemma 4]{hyp_mod_3}.

\begin{lem}
If $J_2=\dots = J_7=0$, then  the binary octavic has a double root.
\end{lem}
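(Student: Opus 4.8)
The plan is to prove the statement in the sharper form of an ideal-membership assertion: the discriminant of the octavic lies in the ideal generated by $J_2, \ldots , J_7$ inside the ring of $SL_2(k)$-invariants, so that whenever these six invariants vanish the discriminant must vanish as well, forcing a repeated root. This reduces the lemma to a weighted-degree count and avoids any manipulation of the explicit (and lengthy) formulas.

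First I would recall two standard facts. The discriminant $\mathrm{disc}(f)$ of the binary octavic $f$ is an $SL_2(k)$-invariant, homogeneous of degree $2\cdot 8 - 2 = 14$ in the coefficients $a_0, \ldots , a_8$, and it vanishes precisely when $f$ has a root of multiplicity at least two. Second, by the description of the invariant ring recalled from \cite{shi1, hyp_mod_3}, the invariants $J_2, J_3, \ldots , J_{10}$ generate the full ring of $SL_2(k)$-invariants of binary octavics, each $J_i$ being homogeneous of degree $i$ in the coefficients (the subscript records this degree, as in Eq.~\eqref{def_J}). In particular the generators of degree exceeding $7$ are exactly $J_8, J_9, J_{10}$, of degrees $8, 9, 10$.

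Since $\mathrm{disc}(f)$ is an invariant of degree $14$, I would write it as an isobaric polynomial of weight $14$ in the generators, $\mathrm{disc}(f) = P(J_2, \ldots , J_{10})$, where every monomial $J_2^{e_2}\cdots J_{10}^{e_{10}}$ satisfies $\sum_i i\, e_i = 14$. The key combinatorial observation is that no such monomial can be assembled out of $J_8, J_9, J_{10}$ alone: the integers representable as $8a + 9b + 10c$ with $a, b, c \ge 0$ not all zero are $8, 9, 10, 16, 17, \ldots$, and $14$ is not among them (if $c = 1$ one would need $8a + 9b = 4$, and if $c = 0$ one would need $8a + 9b = 14$, both impossible since the smallest positive value of $8a+9b$ exceeding $0$ is $8$ and the next relevant ones are $9, 16, 17, 18$). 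Hence every monomial occurring in $P$ contains at least one factor $J_i$ with $2 \le i \le 7$, so that $\mathrm{disc}(f)$ lies in the ideal $(J_2, \ldots , J_7)$ of the invariant ring. Equivalently, modulo this ideal the invariant ring is generated by the images of $J_8, J_9, J_{10}$, whose numerical semigroup misses $14$, so its degree-$14$ graded piece is zero.

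Finally I would conclude: if $J_2 = \cdots = J_7 = 0$ for a given octavic, then $\mathrm{disc}(f) = 0$ by the membership just established, and therefore $f$ has a repeated (in particular a double) root. The main obstacle, and the only place genuinely requiring outside input, is the generation statement, namely that $J_2, \ldots , J_{10}$ with these exact degrees generate the ring; once this is in hand the argument is a pure weighted-degree count needing no computation. As an independent sanity check one could instead substitute the explicit expressions for $J_2, \ldots , J_7$ from \cite{hyp_mod_3} and verify directly that a form with distinct roots cannot annihilate all six, but the semigroup argument above supersedes such a calculation.
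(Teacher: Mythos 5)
Your argument is correct, and it is worth noting that the paper itself offers no proof of this lemma at all: it simply cites \cite[Lemma 4]{hyp_mod_3}, so there is no internal argument to compare against. What you supply is a genuinely self-contained route. The two facts you rely on are standard: the discriminant of a binary octavic is an $SL_2(k)$-invariant, homogeneous of degree $2\cdot 8-2=14$ in the coefficients and vanishing exactly when the form has a repeated linear factor; and Shioda's theorem \cite{shi1} that the invariant ring of binary octavics is generated in degrees $2,3,\dots,10$. Granting that the specific transvectant invariants $J_2,\dots,J_{10}$ of Eq.~\eqref{def_J} are such a generating set (which is exactly what \cite{hyp_mod_3} asserts and what the rest of the paper depends on), your semigroup count is airtight: $14$ is not representable as $8a+9b+10c$ with $a,b,c\geq 0$ not all zero, so every weight-$14$ monomial in the generators contains some $J_i$ with $i\leq 7$, hence $\mathrm{disc}(f)\in (J_2,\dots,J_7)$ and the vanishing of $J_2,\dots,J_7$ forces a double root. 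One small point to make explicit if you write this up: since $\mathrm{disc}(f)$ is homogeneous of degree $14$ and each $J_i$ is homogeneous of degree $i$, any polynomial expression of the discriminant in the generators may be replaced by its isobaric weight-$14$ part, which justifies restricting attention to monomials with $\sum i\,e_i=14$. You correctly flag the generation statement as the one piece of outside input; everything else is elementary, and your approach has the advantage over the paper's bare citation of making transparent exactly which structural fact about the invariant ring the lemma rests on.
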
 

Next, we define  $GL(2, k)$-invariants   as  follows
\[
t_1:= \frac {J_3^2} {J_2^3}, \quad 
t_2:= \frac {J_4} {J_2^2}, \quad 
t_3:= \frac {J_5} {J_2\cdot J_3}, \quad 
t_4:= \frac {J_6} {J_2\cdot  J_4}, \quad 
t_5:= \frac {J_7} {J_2 \cdot J_5}, \quad 
t_6:= \frac {J_8} {J_2^4},
\]
There is an algebraic relation 
\begin{equation}\label{main_eq}
T(t_1, \dots , t_6)=0\
\end{equation}
that such invariants satisfy, computed in \cite{hyp_mod_3}.   
The field of invariants $\S_8$ of binary octavics is $\S_8= k(t_1, \dots , t_6),$  where $t_1, \dots , t_6$ satisfies the equation $T(t_1, \dots , t_6)=0$. Hence, we have an explicit description of the hyperelliptic moduli $\H_3$; see \cite{hyp_mod_3} for details. 

Throughout this paper we will use the following important result

\begin{lem}[Shaska \cite{hyp_mod_3}]
Two genus 3 hyperelliptic curves $C$ and $C^\prime$, defined over an algebraically closed field $k$ of characteristic zero, with $J_2, J_3, J_4, J_5$ nonzero are isomorphic over $k$ if and only if \[ t_i (C) = t_i(C^\prime), \quad \textit{ for } \quad i=1, \dots 6.\]
\end{lem}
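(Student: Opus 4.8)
The plan is to reduce everything to the classical equivalence criterion for binary octavics recorded above, namely that $f$ and $f'$ are $GL_2(k)$-equivalent if and only if $J_i(f)=\lambda^i J_i(f')$ for a common $\lambda=(\det M)^4$. Since the isomorphism classes of genus $3$ hyperelliptic curves correspond bijectively to $GL_2(k)$-orbits of binary octavics, it suffices to argue entirely on the level of the octavics $f,f'$ attached to $C,C'$, where I write $J_i,J_i'$ and $t_i,t_i'$ for their invariants.

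The forward direction is immediate. Each $t_i$ is a ratio of monomials in the $J_j$ whose total weight in numerator and denominator agree: $t_1=J_3^2/J_2^3$ has weight $2\cdot 3-3\cdot 2=0$, and likewise $t_2,\dots,t_6$ all have weight $0$. Hence under the scaling $J_j\mapsto\lambda^jJ_j$ every $t_i$ is unchanged, so isomorphic curves have equal $t_i$.

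For the converse I would first reconstruct the scaling factor $\lambda$. Because $J_2,J_2'\neq0$, I can choose $\lambda$ with $\lambda^2=J_2/J_2'$, which determines $\lambda$ up to sign. The equality $t_1=t_1'$ gives $(J_3/J_3')^2=(J_2/J_2')^3=\lambda^6$, so $J_3=\pm\lambda^3J_3'$, and since $J_3,J_3'\neq0$ exactly one choice of the sign of $\lambda$ yields $J_3=\lambda^3J_3'$; I fix that $\lambda$. Feeding the relations $J_2=\lambda^2J_2'$ and $J_3=\lambda^3J_3'$ into the remaining equalities $t_j=t_j'$ then propagates the scaling: from $t_2$ one gets $J_4=\lambda^4J_4'$, from $t_3$ then $J_5=\lambda^5J_5'$, from $t_4$ (usable precisely because $J_4\neq0$) then $J_6=\lambda^6J_6'$, from $t_5$ (using $J_5\neq0$) then $J_7=\lambda^7J_7'$, and from $t_6$ finally $J_8=\lambda^8J_8'$. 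Thus $J_i=\lambda^iJ_i'$ for all $i=2,\dots,8$.

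The remaining point, which I expect to be the main obstacle, is that the classical criterion also involves $J_9$ and $J_{10}$, whereas the $t_i$ only see $J_2,\dots,J_8$. Here I would invoke the structural result of \cite{hyp_mod_3} that the field of invariants is $\S_8=k(t_1,\dots,t_6)$. Every weight-zero rational invariant is therefore a rational function of $t_1,\dots,t_6$; applying this to the weight-zero combinations $J_9/(J_2^3J_3)$ and $J_{10}/J_2^5$ shows these are determined by the $t_i$ and hence agree for $C$ and $C'$, which combined with $J_2=\lambda^2J_2'$, $J_3=\lambda^3J_3'$ gives $J_9=\lambda^9J_9'$ and $J_{10}=\lambda^{10}J_{10}'$. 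We now have $J_i=\lambda^iJ_i'$ for every $i$; since $k$ is algebraically closed and $\lambda\neq0$ we may write $\lambda=(\det M)^4$ for a suitable $M\in GL_2(k)$, and the classical criterion yields that $f,f'$ are $GL_2(k)$-equivalent, i.e. $C\iso C'$. The delicate parts are the sign normalization of $\lambda$, which is exactly why $J_3\neq0$ is assumed, and the clean elimination of $J_9,J_{10}$, for which the generation statement $\S_8=k(t_1,\dots,t_6)$ is essential.
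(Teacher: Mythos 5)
First, a point of comparison: the paper does not prove this lemma at all --- it is imported verbatim from \cite{hyp_mod_3} --- so there is no in-paper argument to measure yours against. Judged on its own terms, your proof is correct in outline and most steps are sound: the weight-zero check for the forward direction; the normalization of $\lambda$ via $\lambda^2=J_2/J_2'$ followed by the sign-fixing through $J_3$ (which is indeed why $J_3\neq 0$ sits in the hypothesis); the propagation $J_4=\lambda^4J_4',\dots,J_8=\lambda^8J_8'$ through $t_2,\dots,t_6$, using $J_4\neq0$ and $J_5\neq0$ exactly where the definitions of $t_4$ and $t_5$ require them; and the remark that over an algebraically closed field every nonzero $\lambda$ is of the form $(\det M)^4$.

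The genuine gap is the elimination of $J_9$ and $J_{10}$. From the statement that the field of invariants equals $k(t_1,\dots,t_6)$ you conclude that the weight-zero invariants $J_9/(J_2^3J_3)$ and $J_{10}/J_2^5$ are rational functions of the $t_i$ and hence ``agree for $C$ and $C'$.'' But equality in a function field is only a birational statement: the rational function representing, say, $J_9/(J_2^3J_3)$ may have poles along a proper closed subset of the hypersurface $T(t_1,\dots,t_6)=0$ that meets the locus $\{J_2J_3J_4J_5\neq0\}$, and at such a point the equality of \emph{values} does not follow. More structurally, separating all orbits (rather than generic ones) requires information from the ring of invariants, not merely its fraction field. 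So as written your converse establishes the lemma only on a dense open subset of the stated locus. Closing this requires the integral relations --- the corrected syzygies of \cite{hyp_mod_3} --- that express $J_9$ and $J_{10}$ (or suitable powers of them) polynomially in $J_2,\dots,J_8$ on the relevant locus and pin down any residual sign ambiguity; that is precisely the content the present paper outsources to \cite{hyp_mod_3} by citing the lemma instead of proving it.
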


In the cases of curves when $t_1, \dots , t_6$ are not defined we will define new invariants as suggested in \cite{hyp_mod_3}.   From \cite[Lemma 4]{hyp_mod_3} we know that $J_2, \dots , J_7$ can't all be 0, otherwise the binary form would have a multiple root.


To describe the moduli points in cases when absolute invariants are not defined is not difficult.  In this case, one has to treat each case separately when any of the invariants $J_2, \dots J_5$ are zero.  Indeed, we can define invariants depending of which of the invariants is nonzero.  

If  $J_2 \neq 0$, then  we  define
\begin{equation}\label{j2=0}
 i_1= \frac {J_3^2} {J_2^3}, \quad  i_2= \frac {J_4}  {J_2^2}, \quad  i_3= \frac {J_5^2}  {J_2^5}, \quad  i_4= \frac {J_6}  {J_2^3}, \quad  i_5= \frac {J_7^2}  {J_2^7}, \quad i_6=\frac {J_8} {J_2^4} 
\end{equation} 
If $J_2 =0$ then we pick the smallest degree invariant among $J_3, \dots , J_7$ which is not zero.  
Let $J_2=0$ and  $J_3 \neq 0$.  Define 
\[ h_1= \frac {J_4^3}  {J_3^4 }, \quad  h_2=  \frac { J_5^3}  {J_3^5}, \quad  h_3= \frac {J_6}  {J_3^2}, \quad  h_4= \frac {J_7^3 }  {J_3^7}, \quad h_5 = \frac {J_8^3} {J_3^8}\]
Let $J_2=J_3=0$ and $J_4\neq 0$.  Then we have 
\[  j_1= \frac {J_5^4} {J_4^5}, \, \, j_2= \frac {J_6^2} {J_4^3}, \, \, j_3=\frac {J_7^4} {J_4^7}, \, \, j_4=\frac {J_8} {J_4^2}      \]
Let $J_2=J_3=J_4=0$ and $J_5 \neq 0$.  Then 
\[ k_1 = \frac {J_6^5} {J_5^6}, \, \, k_2= \frac {J_7^5} {J_5^7}, \, \, k_3= \frac {J_8^5} {J_5^8} \]
Let us assume that $J_2=J_3=J_4=J_5=0$.   In this case, we define the  absolute invariants 
\[ \t_1:= \frac {J_7^6} {J_6^7}, \quad \t_2 = \frac  {J_8^3} {J_6^4}, \]
There is only one curve in the case when $J_2= \dots = J_6 =0$, namely $ Y^2 = X^7-1$. 
In our discussion in section 5 we will see cases when $J_3=J_5=J_7=0$.  In this case we will use invariants defined in Eq.~\eqref{j2=0}.

Since a tuple $(t_1, \dots , t_6)$ uniquely determines the isomorphism class of a curve then we will study the locus of the curves with a fixed automorphism group $G$ in terms of such invariants $(t_1, \dots , t_6)$. The only interesting cases are  groups $G$ which have non-hyperelliptic involutions; see \cite{g-sh-s} or \cite{beshaj-2}.

To make it easier to state some of the results in the following sections we define the absolute invariants of $\X$ as follows

\[
\mathfrak p (\X) \, =  \, \left\{
\begin{split}
& (t_1, \dots , t_6) \quad \textit{if } \quad J_2, \dots J_5 \quad \textit{are nonzero } \\
& (i_1, \dots , i_6) \quad \textit{if     }  J_2\neq 0 \wedge \left( J_3=0 \vee J_4=0 \vee J_5=0    \right)    \\
& (h_1, \dots , h_5)    \quad \textit{if     }  J_2 = 0 \wedge   J_3 \neq 0\\
& (j_1, j_2, j_3, j_4) \quad \textit{if     }  J_2 = J_3= 0 \wedge J_4 \neq 0   \\
& (k_1, k_2, k_3) \quad \textit{if     }  J_2=J_3=J_4=0   \wedge  J_5 \neq 0  \\
& (\tau_1, \tau_2) \quad \textit{if     }  J_2=J_3=J_4=J_5=0 \\  
\end{split}
\right.
\]

\bigskip

For each case of the above we determine the equation of the moduli space $\H_3$.  For the first case this equation is Eq.~\eqref{main_eq}.  The rest of the cases are described briefly below. 
The $\mathfrak p (\X)$ determines uniquely the isomorphism class of $\X$.

%


Computational benefits of these invariants are that they are of small degree and therefore nicer especially when we deal with families of curves and have to compute symbolically. 
In each case for $\mathfrak p (\X)$ we can compute the equation of the moduli in terms of the corresponding invariants analogous to the $T(t_1, \dots , t_6)=0$ as in Eq.~\eqref{main_eq}.

\subsection{Computing the locus $\S$}

Let $\S$ denote the locus of genus 3 hyperelliptic curves with elliptic involutions.  It follows from the theory of Hurwitz spaces that this is an irreducible 3-dimensional subvariety of $\H_3$; see  \cite{issac, g-sh-s}. In \cite{g-sh-s} it is shown that $k(\S) = k(\s_2, \s_3, \s_4)$.  In this paper, we will give an explicit computational proof of this result and provide a birational parametrization of the locus $\S$.  We will outline all the computations and display only those results which are reasonable to display in this paper. 

Every genus 3 hyperelliptic curve which has and elliptic involution is isomorphic to a curve with equation as in Eq.~\eqref{eq3}.  The obvious strategy would be to compute the invariants $t_1, \dots , t_6$ in terms of $a, b, c$ and then eliminate $a, b, c$ from these equations.  This is rather difficult computationally.  Since dihedral invariants $\s_2, \s_3, \s_4$ are invariant under coordinate changes in $\P^1$ then we can express $t_1, \dots , t_6$ in terms of such invariants as stated in Theorem~\ref{sect1_thm}.   In the next few paragraphs we describe these computations. 

Let $\X_3$ be a genus 3 hyperelliptic curve with equation as in Eq.~\eqref{eq3}.  Notice that from the definitions of the dihedral invariants in Eq.~\eqref{eq2} we have 
\begin{equation}\label{subs}
b = \frac {\s_3} {a^2+c^2}, \quad  b^2 = \frac {\s_3^2} {\s_4 + 2 \s_2^2}, \quad a^8+c^8= \s_4^2 - 2 {\s_2^4}, \quad \left( a^2 + c^2 \right)^2 = \s_4 + 2 \s_2^2  
\end{equation}
We denote by $\l := a^2 + c^2 $.  Then,  $\l^2= \s_4 + 2 \s_2^2$ and $ \l a^2 = (a^2+c^2) \, a^2 = a^4 + \s_2^2$.    By changing the coordinate by 
\[ X \to \left( a^2+c^2 \right) \sqrt{a} \, X\]
we get the curve
\[ Y^2 = \l^8 a^4 \,  X^8 + \l^6  a^4  X^6 + \l^4 b a^2  \,  X^4 + \l^2  c  a \,  X^2 + 1 \]
Notice that the coefficient of $X^4$ is
\[ \l^4 b a^2 = (a^2+c^2)^2 \cdot \l^2 b a^2 = b (a^2+c^2) \cdot a^2 (a^2+c^2) \cdot \l^2 = \s_3 \cdot (a^4+\s_2) \cdot \l^2 \]
Then, we have the curve with equation  
\begin{equation}\label{eq_s}
 Y^2 = A \, X^8 + \frac {A} {\s_4 + 2\s_2^2} \, \, X^6  +  \frac {\s_3  ( A + \s_2^2) } {( \s_4 + 2\s_2^2 )^3} \, \, X^4  +  \frac { \s_2} {(\s_4 + 2\s_2^2)^3} \, \, X^2 + \frac {1} {(\s_4 + 2\s_2^2)^4} 
\end{equation} 
where $A=a^4$. Notice that by substituting $c= \frac {\s_2} a$ in the definition of $\s_4$ we get that $A+ \frac {\s_2^4} A = \s_4$,  which says that $A$ satisfies the equation 
\begin{equation} A^2 - \s_4 A + \s_2^4=0
\end{equation}
We will see how this equation will be useful when discussing the field of definition of the curve $\X_3$. 

From the Eq.~\eqref{eq3} we compute the invariants $J_2, \dots , J_8$.   By performing the above substitutions we get the following expressions
\begin{small}

\begin{equation}\label{eq_J}
\begin{split}
J_2  =  & 2 (140\,\s_4+280\,{\s_2}^2+5\,\s_2\s_4+10\,{\s_2}^3+{\s_3}^2)        \\  \\
J_3   = &  2 (6\,{\s_3}^3+525\,{\s_4}^2+2100\,\s_4{\s_2}^2+2100\,{\s_2}^4-55\,\s_2\s_3\s_4-110\,{\s_2}^3\s_3+1960\,\s_3\s_4+3920\,\s_3{\s_2}^2)  \\   \\
J_4   =& 2^6 \, (  {\s_3}^4+126\,\s_3{\s_4}^2+504\,\s_3\s_4{\s_2}^2+504\,\s_3{\s_2}^4+38416\,{\s_4}^2+153664\,\s_4{\s_2}^2+153664\,{\s_2}^4  \\
& -784\,{\s_2}^3\s_4-784\,{\s_2}^{5}+4\,{\s_4}^2{\s_2}^2+16\,\s_4{\s_2}^4+16\,{\s_2}^{6}-392\,\s_4{\s_3}^2-784\,{\s_2}^2{\s_3}^2+31\,\s_2\s_4{\s_3}^2\\  
& -196\,\s_2{\s_4}^2 +62\,{\s_2}^{3}{\s_3}^2  )  \\ \\
J_5   = & 2^5 \, (76832\,\s_3{\s_4}^2+307328\,\s_3{\s_2}^4+123480\,{\s_4}^2{\s_2}^2+246960\,\s_4{\s_2}^4+1148\,{\s_2}^4{\s_3}^2+287\,{\s_4}^2{\s_3}^2 \\
& -1568\,{\s_3}^3{\s_2}^2+41552\,{\s_2}^{5}\s_3-26\,{\s_2}^3{\s_3}^3-1680\,{\s_2}^{5}\s_4-208\,\s_3{\s_{2}}^{6}-140\,\s_2{\s_4}^3-840\,{\s_2}^3{\s_4}^2\\
& +20580\,{\s_4}^3+2\,{\s_3}^{5}-1120\,{\s_{2}}^{7}+307328\,\s_3\s_4{\s_2}^2+10388\,\s_2\s_3{\s_4}^2+41552\,{\s_2}^3\s_3\s_4 \\
& -52\,\s_3{\s_4}^2{\s_2}^2 +164640\,{\s_2}^{6}-784\,{\s_3}^3\s_4 -208\,\s_3\s_4{\s_2}^4+1148\,{\s_2}^2{\s_{3}}^2\s_4-13\,\s_2{\s_3}^3\s_4)   \\    \\
J_6   = & 2^9 \, \left( 2\,\s_2\s_4+4\,{\s_2}^3-196\,\s_4-392\,{\s_2}^2+{\s_3}^2 \right)  \left( {\s_3}^4-378\,\s_3{\s_4}^2-1512\,\s_3\s_4{\s_2}^2-1512\,\s_3{\s_2}^4 \right.  \\
&  -10192\,{\s_2}^3\s_4-10192\,{\s_2}^{5}-77\,\s_2\s_4{\s_3}^2-154\,{\s_2}^3{\s_3}^2+4\,{\s_4}^2{\s_2}^2+16\,\s_4{\s_2}^4+16\,{\s_2}^{6}\\
& \left. +38416\,{\s_4}^2 +153664\,\s_4{\s_2}^2  +153664\,{\s_2}^{4}-392\,\s_4{\s_3}^2-784\,{\s_2}^2{\s_3}^2  -2548\,\s_2{\s_4}^2  \right)   \\   \\
J_7 = & 2^7 \, ( 1120\,{\s_2}^4{\s_4}^3 -120472576\,\s_3{\s_2}^{6}-34300\,\s_2{\s_4}^4+3360\,{\s_2}^{6}{\s_4}^2+4480\,{\s_2}^{8}\s_4+140\,{\s_4}^4{\s_2}^2 \\
& -203840\,\s_3{\s_2}^{8}+608\,\s_3{\s_2}^{9}+4410\,\s_3{\s_4}^4-39200\,{\s_{{2}}}^{5}{\s_3}^3-931\,{\s_3}^4{\s_4}^2-3724\,{\s_3}^4{\s_2}^4\\
& -90\,{\s_3}^{5}{\s_2}^3-1176\,{\s_3}^{5}\s_4-2352\,{\s_3}^{5}{\s_2}^2+161896\,{\s_3}^2{\s_4}^3+8344\,{\s_2}^{7}{\s_3}^2-274400\,{\s_2}^3{\s_4}^3\\
& + 230496\,{\s_3}^3{\s_4}^2+921984\,{\s_3}^3{\s_2}^4+1295168\,{\s_2}^{6}{\s_3}^2+129077760\,{\s_2}^{6}\s_4-1097600\,{\s_2}^{7}\s_4  \\
& -15059072\,\s_3{\s_4}^3+96808320\,{\s_2}^4{\s_4}^2+29196160\,{\s_2}^{7}\s_3+2240\,{\s_2}^{10}+4033680\,{\s_4}^4 \\
& +2\,{\s_3}^{7}-90354432\,\s_3{\s_4}^2{\s_2}^2-180708864\,\s_3\s_4{\s_2}^4-270480\,\s_3{\s_2}^{6}\s_4-45\,{\s_3}^{5}\s_{{2}}\s_4\\
& +6258\,{\s_2}^{3}{\s_3}^2{\s_4}^2+912\,\s_3{\s_2}^{7}\s_4+971376\,{\s_2}^2{\s_3}^2{\s_4}^2-9800\,\s_2{\s_3}^3{\s_{{4}}}^2+345\,{\s_2}^2{\s_3}^3{\s_4}^2\\
& +456\,\s_3{\s_{2}}^{5}{\s_4}^2  -39200\,{\s_2}^3{\s_3}^3\s_4  +64538880\,{\s_2}^{8} +1380\,{\s_2}^{6}{\s_3}^3 \\
& -3724\,{\s_3}^4\s_4{\s_2}^2+921984\,{\s_3}^3\s_4{\s_2}^2+1043\,\s_2{\s_3}^2{\s_4}^3+21897120\,{\s_2}^3\s_3{\s_4}^2+76\,\s_3{\s_2}^3{\s_4}^3 \\
& +43794240\,{\s_2}^{5}\s_3\s_4+1380\,{\s_2}^4{\s_3}^3\s_4+3649520\,\s_2\s_3{\s_4}^3+1942752\,{\s_2}^4{\s_3}^{2}\s_4+980\,\s_3{\s_4}^3{\s_2}^2 \\
& +32269440\,{\s_4}^3{\s_2}^2 -99960\,\s_3{\s_{2}}^4{\s_4}^2  -548800\,{\s_2}^{9}  +12516\,{\s_2}^{5}{\s_3}^2\s_4 -823200\,{\s_2}^{5}{\s_4}^2 ) \\
\end{split}  
\end{equation}
\end{small} 

\medskip

We do not display  $J_8$ but it is  easy to compute as the previous ones.   The reader who want to obtain the above expressions can  use computational packages.  For example, "algsubs" would work in Maple and similar commands in other packages.  

Hence, we can write now $t_1, \dots , t_6$ in terms of the dihedral invariants $\s_2, \s_3, \s_4$ 
%

\begin{small}
\begin{equation}\label{eq_i}
\begin{split}
t_1 & =  \frac  1 2 \cdot \frac {(3920 \s_3 \s_2^2+2100 \s_4^2+2100 \s_2^2 \s_4+525 \s_2^4+7840 \s_3 \s_4+24 \s_3^3-110 \s_3 \s_4 \s_2-55 \s_3 \s_2^3)^2 (2 \s_4+\s_2^2)^2} {(560 \s_4+280 \s_2^2+10 \s_2 \s_4+5 \s_2^3+4 \s_3^2)^3} \\
t_2 & = \frac {64} {(560 \s_4+280 \s_2^2+10 \s_2 \s_4+5 \s_2^3+4 \s_3^2)^2 } \, (38416 \s_2^4+\s_2^6+4 \s_4^2 \s_2^2+4 \s_2^4 \s_4-392 \s_2^3 \s_4+153664 \s_2^2 \s_4-392 \s_4^2 \s_2 \\
& +4 \s_3^4 -98 \s_2^5+504 \s_3 \s_2^2 \s_4-1568 \s_3^2 \s_4-784 \s_3^2 \s_2^2+504 \s_3 \s_4^2+126 \s_3 \s_2^4+62 \s_3^2 \s_4 \s_2+153664 \s_4^2+31 \s_3^2 \s_2^3)) \\
t_3 &= - \frac {32}  {(560 \s_4+280 \s_2^2+10 \s_2 \s_4+5 \s_2^3+4 \s_3^2)    } \cdot \frac M N    \\
& M  = -123480 \s_4^2 \s_2^2-61740 \s_2^4 \s_4-307328 \s_3 \s_4^2-76832 \s_3 \s_2^4-5194 \s_3 \s_2^5+13 \s_3^3 \s_2^3+3136 \s_3^3 \s_4+1568 \s_3^3 \s_2^2 \\
 & -1148 \s_3^2 \s_4^2+280 \s_4^3 \s_2+420 \s_2^3 \s_4^2+210 \s_2^5 \s_4+13 \s_3 \s_2^6-287 \s_3^2 \s_2^4-10290 \s_2^6-8 \s_3^5-82320 \s_4^3+35 \s_2^7 \\
 & -307328 \s_3 \s_2^2 \s_4-20776 \s_3 \s_4^2 \s_2+52 \s_3 \s_4^2 \s_2^2-20776 \s_3 \s_2^3 \s_4+52 \s_3 \s_2^4 \s_4+26 \s_3^3 \s_4 \s_2-1148 \s_3^2 \s_4 \s_2^2 \\
& N  = 3920 \s_3 \s_2^2+2100 \s_4^2+2100 \s_2^2 \s_4+525 \s_2^4+7840 \s_3 \s_4+24 \s_3^3-110 \s_3 \s_4 \s_2-55 \s_3 \s_2^3 \\
t_4 & = - \frac A B \cdot  \frac {(8 (\s_2^3+2 \s_2 \s_4-392 \s_4-196 \s_2^2+2 \s_3^2))}  {(560 \s_4+280 \s_2^2+10 \s_2 \s_4+5 \s_2^3+4 \s_3^2) } \\
& A  = -38416 \s_2^4-\s_2^6-4 \s_4^2 \s_2^2-4 \s_2^4 \s_4+5096 \s_2^3 \s_4-153664 \s_2^2 \s_4+5096 \s_4^2 \s_2-4 \s_3^4+1274 \s_2^5+1512 \s_3 \s_2^2 \s_4\\
& +1568 \s_3^2 \s_4+784 \s_3^2 \s_2^2+1512 \s_3 \s_4^2+378 \s_3 \s_2^4+154 \s_3^2 \s_4 \s_2-153664 \s_4^2+77 \s_3^2 \s_2^3) \\
& B  = 38416 \s_2^4+\s_2^6+4 \s_4^2 \s_2^2+4 \s_2^4 \s_4-392 \s_2^3 \s_4+153664 \s_2^2 \s_4-392 \s_4^2 \s_2+4 \s_3^4-98 \s_2^5+504 \s_3 \s_2^2 \s_4\\
& -1568 \s_3^2 \s_4-784 \s_3^2 \s_2^2+504 \s_3 \s_4^2+126 \s_3 \s_2^4+62 \s_3^2 \s_4 \s_2+153664 \s_4^2+31 \s_3^2 \s_2^3 \\
\end{split}
\end{equation}

\end{small}

\noindent We are not displaying $t_5$ and $t_6$.

Hence, we have a  map give by the above equations 
\[
\begin{split}
& k^3\setminus \{ \Delta = 0\} \to \S =\M_3^b \cap \H_3 \\
& (\s_2, \s_3,  \s_4) \to (t_1, t_2, t_3, t_4, t_5, t_6 ) \\
\end{split}
\]
which as it will be shown in the next theorem is birational.

\begin{thm}
 $k(\S) = k(\s_2, \s_3, \s_4) $.  
\end{thm}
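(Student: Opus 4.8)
\proof
The plan is to prove that the parametrization
\[
\phi \colon k^3 \setminus \{\Delta = 0\} \to \S, \qquad (\s_2,\s_3,\s_4) \mapsto (t_1,\dots,t_6),
\]
is birational; the asserted equality of function fields is then immediate. One inclusion, $k(\S)\subseteq k(\s_2,\s_3,\s_4)$, costs nothing: the formulas \eqref{eq_i} (together with the undisplayed expressions for $t_5,t_6$) exhibit each restriction $t_i|_\S$ as an element of $k(\s_2,\s_3,\s_4)$, so $\phi$ is a rational map whose coordinate functions lie in $k(\s_2,\s_3,\s_4)$. Since $\S$ is irreducible of dimension $3$, and since the map $(a,b,c)\mapsto(\s_2,\s_3,\s_4)$ is a branched $D_4$-cover so that $k(\s_2,\s_3,\s_4)=k(a,b,c)^{D_4}$ has transcendence degree $3$, both fields have transcendence degree $3$ over $k$. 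Hence $k(\S)\subseteq k(\s_2,\s_3,\s_4)$ is a finite extension, and it remains only to show that its degree is $1$, i.e. that $\phi$ is generically injective.

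For generic injectivity I would argue on the moduli level. First, $\phi$ is dominant, because by Section~3 every curve in $\S$ is isomorphic to one of the form \eqref{eq3}, hence lies in the image of $\phi$. Next, the fibre of $\phi$ over a general point $[C]\in\S$ is identified, via Lemma~\ref{lemma1}, with the set of isomorphism classes of pairs $(C,\e)$ having $C$ as underlying curve; by the definition preceding that lemma this is the set of elliptic involutions $\e$ in $\bAut(C)$ up to $\Aut(C)$-conjugacy. By Theorem~\ref{sect1_thm}(iii) a curve in $\S$ satisfies $\V \emb \Aut(C)$, and for a \emph{generic} such curve this containment is an equality $\Aut(C)=\V$. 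In that case $\bAut(C)=\V/\langle \z_0\rangle\cong\Z_2$ contains a single non-hyperelliptic (hence elliptic) involution $\e$, so there is exactly one admissible pair $(C,\e)$, and by Lemma~\ref{lemma1} exactly one triple $(\s_2,\s_3,\s_4)$ in the fibre. Thus $\phi$ has degree $1$ and is birational, which gives $k(\S)=k(\s_2,\s_3,\s_4)$.

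To make the argument explicitly computational, and to produce the birational inverse, I would invert $\phi$ directly from \eqref{eq_i}. Writing $D_0 = 560\s_4+280\s_2^2+10\s_2\s_4+5\s_2^3+4\s_3^2$ for the common denominator appearing there, one notes that $\phi$ carries accidental factors which can be cleared by forming suitable products: the combination $t_1 t_3^2 = 512\,M^2(2\s_4+\s_2^2)^2/D_0^5$ cancels the factor $N$, while $t_2 t_4 = -512\,A(\s_2^3+2\s_2\s_4-392\s_4-196\s_2^2+2\s_3^2)/D_0^3$ cancels the factor $B$. Clearing denominators in these and in $t_2=64B/D_0^2$ then yields a reduced system of polynomial equations in $\s_2,\s_3,\s_4$, which I would solve by successive resultants (or a Gr\"obner basis in an elimination order), for instance first expressing $\s_2$ and $\s_3^2$ in terms of the $t_i$ and $\s_4$, and then using one remaining relation to determine $\s_4$. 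The main obstacle is purely computational: the numerators in \eqref{eq_i} are large and of high degree, so the elimination must be organized carefully, and one must certify that the recovered solution is unique by displaying explicit rational inverse formulas $\s_j=\s_j(t_1,\dots,t_6)$ whose denominators do not vanish identically on $\S$. This simultaneously confirms that $\phi$ has degree $1$ and that $\s_2,\s_3,\s_4\in k(t_1,\dots,t_6)=k(\S)$, completing the proof.
\qed
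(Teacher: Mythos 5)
Your proof is correct, but it takes a genuinely different route from the paper's. The paper's own argument is a short divisibility count: $k(\S)$ sits inside $k(\s_2,\s_3,\s_4)$ and contains each $t_i$, the degrees of $t_1,\dots,t_6$ as rational functions of $\s_2,\s_3,\s_4$ are $12,6,7,9,10,12$, and since $[k(\s_2,\s_3,\s_4):k(\S)]$ must divide each of these (in particular both $6$ and $7$), it equals $1$. You instead establish generic injectivity of the parametrization at the level of moduli: by Lemma~\ref{lemma1} the fibre of $(\s_2,\s_3,\s_4)\mapsto(t_1,\dots,t_6)$ over $[C]$ is identified with the set of conjugacy classes of elliptic involutions in $\bAut(C)$, and for the generic curve of $\S$ one has $\Aut(C)=V_4$, so $\bAut(C)\cong\Z_2$ and this set is a singleton. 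Your version is more conceptual and arguably more self-contained: the paper's divisibility step tacitly requires a justification of why the total degree of each individual multivariate rational function $t_i$ bounds the degree of the field extension, whereas your fibre count needs only Lemma~\ref{lemma1} together with the (standard, and verified in Section~5) fact that the strata with $|\Aut(\X)|>4$ are proper closed subvarieties of $\S$; in characteristic zero generic injectivity of a dominant map between irreducible $3$-dimensional varieties then gives birationality. Your final paragraph, proposing an explicit elimination to produce rational inverse formulas $\s_j=\s_j(t_1,\dots,t_6)$, is not the argument the paper gives in this proof, but it is exactly what the authors claim to have carried out elsewhere (the remark following the theorem and the proof of Theorem~\ref{mainthm_kap3} both appeal to such explicit expressions), so that part aligns with their stated computational strategy.
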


\proof
Since $k (\S)$ is a subfield of $k(\s_2, \s_3, \s_4$ which contains all $k(t_i)$ for $i=1, \dots , 6$ then $[k(\s_2, \s_3, \s_4):k(\S)]$ must divide each of degrees if $t_i$.  The degrees of $t_i$ as rational functions in $\s_2, \s_3, \s_4$ are respectively 12, 6, 7, 9, 10, 12.   
Hence, $[k (\s_2, \s_3, \s_4) : k (\S)]=1$.  This completes the proof. 

\qed 
 
 This was also proved in \cite{g-sh-s} for any genus $g \geq 2$.  Here we provide a direct computational proof and explicitly determine the formulas for $\s_2, \s_3, \s_4$ as rational functions in terms of $t_1, \dots , t_6$. 
We have the following theorem.

\begin{thm} \label{mainthm_kap3}
The space $\S:=\M_3^b \cap \H_3$ is an irreducible, codimension 1, rational subvariety of $\M_3^b$. Its defining equations   are 
\begin{equation}
\begin{split}\label{eq_L2_J}
 F_i (J_2, \dots , J_8 )= & 0, \quad i=1 \dots 5
\end{split}
\end{equation}
as displayed in \cite{homepage}.  The map  
\begin{equation}
\begin{split}
k^3\setminus \{\Delta=0\}   &   \to \S :=\M_3^b \cap \H_3 \\
(\s_2, \s_3, \s_4)          & \to \left( t_1, \dots , t_6\right) \\
\end{split}
\end{equation}
given by Eq.~\eqref{eq_i}  (in homogeneous coordinates  by the formulas \eqref{eq_J})  is birational and surjective.
\end{thm}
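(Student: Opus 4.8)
The plan is to prove the three assertions---birationality, surjectivity, and the explicit defining equations---by reducing everything to (weighted) elimination and a function-field comparison, using Lemma~\ref{lemma1}, Theorem~\ref{sect1_thm}, and the preceding theorem $k(\S)=k(\s_2,\s_3,\s_4)$ as the main inputs; the rationality, dimension and codimension statements then follow formally.

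First I would check that the map $\phi\colon(\s_2,\s_3,\s_4)\mapsto(t_1,\dots,t_6)$ is well defined with image in $\S$. Given a triple with $\D\neq 0$, the remarks preceding Lemma~\ref{lemma1} produce $(a,b,c)$, unique up to the $D_4$-action, realizing it; by Lemma~\ref{lemma1} the curve \eqref{eq3} is a genus $3$ hyperelliptic curve carrying the elliptic involution $\e_{a,b,c}$, so its moduli point lies in $\S$. The formulas \eqref{eq_J} exhibit $J_2,\dots,J_8$ as weighted-homogeneous polynomials in $(\s_2,\s_3,\s_4)$, so the homogeneous version of $\phi$ is a genuine morphism into the weighted projective space carrying $\H_3$ (the target coordinates never vanish simultaneously, since by the cited lemma $J_2=\dots=J_7=0$ would force a double root, contradicting $\D\neq0$), and \eqref{eq_i} is its dehomogenization on the locus $J_2\cdots J_5\neq 0$.

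Next I would establish birationality and surjectivity. Birationality is the effective form of the preceding theorem: pulling back the $t_i$ gives $k(\S)\subseteq k(\s_2,\s_3,\s_4)$, and the degree count there forces equality, so $\phi^{*}$ is an isomorphism of function fields. To exhibit the inverse explicitly---as promised in the text---I would solve the system \eqref{eq_i} for $\s_2,\s_3,\s_4$ as rational functions of $t_1,\dots,t_6$, using the low-degree relations (e.g.\ those coming from $t_1,t_2,t_4$) to eliminate two of the three $\s_i$ and back-substituting. Surjectivity follows from the normal form: any class in $\S$ is that of a genus $3$ hyperelliptic curve with an elliptic involution, hence by Lemma~\ref{lemma1} isomorphic to some \eqref{eq3} with $\D\neq0$ and thus in the image; the degenerate points where some $J_i$ vanish (so the $t_i$ are undefined) are recovered through the homogeneous $J$-coordinates and the substitute invariants \eqref{def_u}, giving surjectivity of the $J$-valued map onto all of $\S$.

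Finally I would produce the defining equations by eliminating $\s_2,\s_3,\s_4$ from the parametrization \eqref{eq_J}: computing the elimination ideal $\bigl(J_i-J_i(\s_2,\s_3,\s_4)\bigr)\cap k[J_2,\dots,J_8]$ via a weight-respecting Gr\"obner basis yields generators $F_1,\dots,F_5$, displayed in \cite{homepage}. Since $\phi$ is generically injective from a $3$-dimensional source and $\dim\H_3=5$, the image $\S$ has codimension $2$ in $\H_3$; I would then check that $F_1=\dots=F_5=0$ cut out an irreducible $3$-fold equal to $\S$, rather than a larger variety, by verifying the zero locus has the right dimension and that $\phi$ dominates it. The rationality package then follows: $\phi$ makes $\S$ birational to affine $3$-space, hence $\S$ is rational and $3$-dimensional, and since $\dim\M_3^b=2g-2=4$ this is codimension $1$, with irreducibility already supplied by \cite{BC} and the Hurwitz-space description of Section~2. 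The hard part is the elimination: the $J_i(\s_2,\s_3,\s_4)$ are bulky weighted-homogeneous polynomials, so both computing the $F_i$ and certifying that they define exactly $\S$ (primeness, correct dimension, no spurious components) is the genuinely delicate step, while everything else is a direct consequence of the preceding theorem or bookkeeping with the normal form.
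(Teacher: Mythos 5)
Your proposal is correct and follows essentially the same route as the paper: the first part is obtained from the cited result of \cite{BC} (via the birational identification of $\S$ with the moduli of genus~2 curves with a nontrivial $2$-torsion class) together with the explicit parametrization, and birationality rests on the same function-field degree argument $[k(\s_2,\s_3,\s_4):k(\S)]=1$ from the preceding theorem, with the defining equations produced by elimination from \eqref{eq_J}. You merely spell out in more detail the well-definedness, surjectivity via the normal form of Lemma~\ref{lemma1}, and the elimination step, all of which the paper treats implicitly or defers to \cite{homepage}.
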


\proof  From Theorem 1, ii) we have that  $\M_3^b\cap \H_3$ is birationally isomorphic to the coarse moduli space of smooth curves of genus 2 together with a nontrivial divisor class of order 2.  Since this space is an irreducible, 3-dimensional, rational variety then the first part of the theorem is proved.

It remains to show  the map in the Theorem is birational.
We need to show that the degree of the field extension  $k(\s_2, \s_3, \s_4)/k(\S)$ is 1. For
this we use the functions $ t_1, t_2, t_3, t_4, t_5, t_6$ in $k(\H_3)$.    The fact that $[k(\s_2, \s_3, \s_4) : k(\S) ] =1$   comes straight from the computation of the locus $\S$ where we get rational expressions for the $\s_2, \s_3, \s_4$ or from Theorem 2.  This completes the proof. 
\qed

\subsection{Field of moduli vs field of definition}
It is a classical problem in the arithmetic of the algebraic curves to try to find an equation of the curve in terms of the moduli point corresponding to this curve.  In other words, this means that given the moduli point $\p (\X)$, could we determine an equation for $\X$ in terms of the coordinates of $\p (\X)$.  If an equation of the curve can be found in terms of coordinates of the moduli point we say that the field of moduli is the same with the minimal field of definition.  In 2003 the first author conjectured that this would be the case for all hyperelliptic curves with extra involutions \cite{sh_03, sh_02}.  It is true for $g=2$ and as we will see next it is true for all genus 3 hyperelliptic curves in $\p \in \M_3^b \cap \H_3$ such that $|  \Aut (\p) | > 2$. There have been claims on whether the above conjecture is true or false and some confusion from work of Huggins \cite{Hug} and Fuertes \cite{Fuertes}  which seem to come from different definitions of the field of moduli. 


Summarizing we have the results of section 4.1 and Eq.~(15) and (16) we have the following:
\begin{proposition} Let $[\X ] \in \M_3^b \cap \H_3$.  Then the following hold true:

i)   $\X$ is isomorphic to a curve with equation 
\[ Y^2 = A \, X^8 + \frac {A} {\s_4 + 2\s_2^2} \, \, X^6  +  \frac {\s_3  ( A + \s_2^2) } {( \s_4 + 2\s_2^2 )^3} \, \, X^4  +  \frac { \s_2} {(\s_4 + 2\s_2^2)^3} \, \, X^2 + \frac {1} {(\s_4 + 2\s_2^2)^4} \]
where $A$ satisfies
\begin{equation}\label{quadratic}  A^2 -  \s_4 A + \s_2^4 =0, \end{equation}
for some $(\s_2, \s_3, \s_4 \in  k^3 \setminus  \{ \D_{\s_2, \s_3, \s_4 } = 0 \}$  and $\D_{\s_2, \s_3, \s_4 }$ as in Eq.~(6). 

ii) Let $F$ denote the field of moduli of $\X$ and $F^\prime$ its minimal field of definition.  Then, $F^\prime \subset F(A)$ and $[F^\prime : F] \leq 2$. An equation of $\X$ over $F(A) $ is given by Eq.~\eqref{quadratic}.

iii) If the discriminant $  d= \s_4^2 - 4 \s_2^4  $ of the quadratic  in Eq.~ \eqref{quadratic}   is 
 is a complete square in $k (\s_2, \s_3, \s_4)$ then the corresponding curve is defined over its field of moduli. 
\end{proposition}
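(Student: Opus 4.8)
The plan is to extract all three parts from the normal form constructed in Section~4.1 together with the identification $k(\S)=k(\s_2,\s_3,\s_4)$. For part i) I would only need to retrace the reduction preceding Eq.~\eqref{eq_s}: by Lemma~\ref{lemma1} every $[\X]\in\M_3^b\cap\H_3$ is carried to a model $Y^2=X^8+aX^6+bX^4+cX^2+1$ as in Eq.~\eqref{eq3}, and the substitution $X\mapsto(a^2+c^2)\sqrt{a}\,X$ puts it in the displayed shape with $A=a^4$. The one identity to verify is that $A$ solves the quadratic: writing $c=\s_2/a$ in $\s_4=a^4+c^4$ gives $A+\s_2^4/A=\s_4$, i.e. $A^2-\s_4A+\s_2^4=0$, which is Eq.~\eqref{quadratic}.

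For part ii) the essential input is that the field of moduli $F$ of $\X$ is generated by the dihedral invariants $\s_2,\s_3,\s_4$. The absolute invariants $t_1,\dots,t_6$ generate the field of moduli, and by the equality $k(\S)=k(\s_2,\s_3,\s_4)$ proved above the inversion of Eq.~\eqref{eq_i} writes each $\s_i$ as a rational function of the $t_j$; hence $\s_2,\s_3,\s_4\in F$. Every coefficient of the model in part i) is a rational function of $\s_2,\s_3,\s_4$ that is at most linear in $A$, so all of them lie in $F(A)$ and $\X$ is defined over $F(A)$. Since $A$ is a root of Eq.~\eqref{quadratic}, whose coefficients $\s_4$ and $\s_2^4$ lie in $F$, we get $[F(A):F]\le 2$; and as the field of moduli is contained in any field of definition, the minimal field of definition $F'$ satisfies $F\subseteq F'\subseteq F(A)$, so $[F':F]\le 2$.

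Part iii) is then immediate. The roots of Eq.~\eqref{quadratic} are $A=\tfrac12\bigl(\s_4\pm\sqrt{d}\bigr)$ with $d=\s_4^2-4\s_2^4$; if $d$ is a complete square in $F=k(\s_2,\s_3,\s_4)$ then $\sqrt{d}\in F$, so $A\in F$, hence $F(A)=F$ and the model of part i) is already defined over the field of moduli.

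The two verifications are routine; the point that really needs care is the identification of $F$ with $k(\s_2,\s_3,\s_4)$, which is exactly where the strengthening from Theorem~\ref{sect1_thm} to the equality $k(\S)=k(\s_2,\s_3,\s_4)$ is used. A priori $\s_2,\s_3,\s_4$ are only invariants of the special model \eqref{eq3} under the $D_4$-action $\langle\tau_1,\tau_2\rangle$, and not manifestly $GL_2(k)$-invariants of the octavic; one must know they are rational functions of the genuine invariants $t_i$ in order to place them inside the field of moduli rather than in some extension of it. The remaining subtlety, the inclusion $F'\subseteq F(A)$, is handled by the standard characteristic-zero fact that the field of moduli lies in every field of definition, so that exhibiting the single degree-$\le 2$ field of definition $F(A)$ already bounds the minimal one.
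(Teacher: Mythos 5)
Your proposal is correct and follows essentially the same route as the paper: part i) is the normal form and quadratic $A^2-\s_4A+\s_2^4=0$ derived in Section~4.1, part ii) rests on identifying the field of moduli $F=k(t_1,\dots,t_6)$ with $k(\s_2,\s_3,\s_4)$ via Theorem~2 so that the model is defined over $F(A)$ with $[F(A):F]\le 2$, and part iii) is the immediate observation that $\sqrt{d}\in F$ forces $A\in F$. If anything, you spell out more carefully than the paper does why $\s_2,\s_3,\s_4$ land inside the field of moduli rather than a proper extension of it, which is indeed the one point that needs the birationality statement $k(\S)=k(\s_2,\s_3,\s_4)$.
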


\proof  Part i) was proved in section 4.1.  The field of moduli of a curve $\X$ is $F=k(t_1, \dots , t_6)$. Hence, form Theorem 2, $F=k( \s_2, \s_3, \s_4)$.  Since $\X$ is defined via Eq.~\eqref{quadratic} over 
$F \left(\sqrt{\s_4^2 - 4 \s_2^4} \right)$, then $F^\prime \subset F(A)$ and $[F^\prime : F] \leq 2$.
Prat iii) is clearly true. 

\qed

The above result improves on the bound of the degree of $[F^\prime : F] \leq 8 $ as shown in \cite{ritz1}. It is expected that the field of moduli is a field of definition for all curves $\X \in \M_3^b \cap \H_3$.  A generalization of methods for $g=2$ should provide and algorithm also for $g=3$. 

 
\section{Singular locus of $\S$, classification of strata of the hyperelliptic moduli}

In this section we give a classification of the strata of the hyperelliptic moduli 3.  The stratum of the hyperelliptic moduli has been known to the classical algebraic geometers.  Indeed, it is the only case that was considered fully known even though explicit descriptions were not available even for small genus (i.e., $g=2$).  On the turn of the new century a couple of papers appeared for the case of genus $g=2$; see \cite{gaudry} and \cite{sh-v}.

\subsection{Singular loci in terms of dihedral invariants} 

Next, we will characterize each one of these loci in terms of the $\ss$-invariants.   The proof of the following theorem can be found in \cite{g-sh-s}, where such relations were determined by studying the group action on the invariants $\s_2, \s_3, \s_4$.  Here we give a more computational proof.

\subsection{2-dimensional strata}

There are two 2-dimensional loci in $\H_3$ which correspond to the case when the reduced automorphism group of the curve is isomorphic to $V_4$.  
Indeed, the following is true for any genus $g \geq 2$; see \cite[Theorem 3]{g-sh}.  

Let $\X_g $ be a genus $g$ hyperelliptic curve with an extra involution, $\bAut (\X_g)$ its reduced automorphism group,  and $(\s_4, \dots , \s_g)$ its corresponding dihedral invariants as defined in \cite{g-sh}.  

If $V_4\emb     \bAut (\X_g)$ then   $ 2^{g-1}\, \u_1^2 = \u_g^{g+1}$.    Moreover, if $g$ is odd then $V_4 \emb \bAut( \X_g )$ implies that
\begin{equation} \left( 2^r \, \u_1 - \u_g^{r+1}\right) \, \left( 2^r \, \u_1 + \u_g^{r+1}\right)=0  \end{equation}
where $r= \left[ \frac {g-1} 2 \right]$. The first factor corresponds to the case when  involutions
of $V_4\emb \bAut (\X_g)$ lift to involutions in $\Aut (\X_g)$, the second factor corresponds to the case when two
of the involutions of $V_4\emb \bAut (\X_g)$ lift  to elements of order 4 in $\Aut (\X_g)$.

In the case of genus $g=3$,  we have $\u_1=\s_4$ and $\u_g=\frac {\s_2} 2$ and the relation becomes 
 $\s_4- 2 \s_2^2=0$.   This gives exactly the cases when the full automorphism group is $\Z_2^3$.  We will verify such fact directly below.

\subsubsection{The automorphism group is isomorphic to  $G \iso \Z_2^3$:}  

The equation of the curve from Table 3 of \cite{kyoto} is 
\[ y^2 = (x^4+ax^2+1)(x^4+b x^2+1)\]
Let new parameters $u$ and $v$ be as follows
\[ u:=a+b, \quad and \quad v=ab. \]
Then we have 
\begin{equation}\label{eq_Z2_3}
 y^2= x^8 + ux^6 + (v+2) x^4 + u x^2 +1 
\end{equation} 
The dihedral invariants are 
\[ \s_4 = 2u^4, \quad \s_3 =2 u^2 (v+2), \quad \s_2 =  u^2\]
Then directly we can verify  $u^2=  \s_2$, $v= \frac {\s_3 - 2\s_2} {2\s_2}$ and  
\begin{equation}
\s_4- 2 \s_2^2=0.
\end{equation}
By transforming the coordinate $X$ as $X\to \sqrt{u} X$ on the curve in Eq.\eqref{eq_Z2_3} we get  
\[ Y^2 = u^4 X^8 + u^4 X^6 + (v +2)(u^2)X^4 + u^2 X^2 +1 \]
or
\begin{equation} Y^2 = \s_2^2 \,  X^8 + \s_2^2 \, X^6 + \frac {\s_3} {2} \, X^4 + \s_2 \, X^2 + 1 \end{equation}
We compute the invariants $t_1, \dots , t_6$  in terms of $\s_3, \s_2$.   Eliminating $\s_3$ and $\s_2$ from the system of equations gives the locus $\S (\Z_2^3)$ and rational expressions of $\s_3, \s_2$ in terms of $t_1, \dots , t_6$.  The computations are long and the results involve very large expressions.  Instead, we provide a quicker proof for the reader which is easier to check.

From the expressions of $t_1, \dots , t_6$ in terms of $\s_3, \s_2$ we eliminate $\s_3$.  In other words, $\s_3$ is easily written as a rational function in terms of $\s_2, t_1, \dots t_6$.   Any software package such as maple or Mathematica will be able to do this.  We are left with 5 equations of degree 20, 18, 16, 23, and 21.  Since the function field $k(\S(\Z_2^3))$ is a subfield of $k(\s_2)$ then the degree of this extension  $[k(\s_2), k(\S(\Z_2^3))]$ is a common divisor of 20, 18, 16, 23, and 21.  Therefore, $[k(\s_2), k(\S(\Z_2^3))]=1$  and $k(\S(\Z_2^3))=k(\s_2)$.

\begin{lem} Every genus 3 hyperelliptic curve with full automorphism group isomorphic to  $\Z_2^3$ has equation 
\begin{equation} Y^2 = \s_2^2 \,  X^8 + \s_2^2 \, X^6 + \frac {\s_3} {2} \, X^4 + \s_2 \, X^2 + 1  \end{equation}
for $\s_3, \s_2  \neq 0, 4$.   Moreover,  $k(\S (\Z_2^3) = k(\s_4, \s_3)$ and therefore the field of moduli is a field of definition.  
\end{lem}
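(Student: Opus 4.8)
The plan is to establish the normal form first and then the function field; the statement that the field of moduli is a field of definition will then drop out. For the equation I would take the $\Z_2^3$ representative from Table~3 of \cite{kyoto}, namely $y^2=(x^4+ax^2+1)(x^4+bx^2+1)$, and set $u=a+b$, $v=ab$. This is the form of Eq.~\eqref{eq3} with coefficient triple $(u,\,v+2,\,u)$, that is, with equal outer coefficients; that equality is precisely the symmetry fixed by $\tau_2$, so the reduced group is $V_4$. Reading \eqref{eq2} gives $\s_2=u^2$, $\s_3=2u^2(v+2)$, $\s_4=2u^4$, which confirms the stratum relation $\s_4=2\s_2^2$ and allows the inversion $u^2=\s_2$, $v+2=\s_3/(2\s_2)$. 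Rescaling $X\mapsto\sqrt{u}\,X$ and clearing the leading coefficient then yields the displayed model, whose coefficients are rational in $(\s_2,\s_3)$. The excluded values $\s_2,\s_3\in\{0,4\}$ are the points of $\s_4=2\s_2^2$ at which $\D$ of \eqref{D_3} vanishes or the automorphism group jumps; for instance $\s_2=0$ forces $2\s_4+\s_2^2=0$, which by the Remark after Lemma~\ref{lemma1} is the curve $Y^2=X^8+bX^4+1$ with group $\Z_2\times D_8$.

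For the function field I would substitute $\s_4=2\s_2^2$ into \eqref{eq_i}, writing each $t_i$ as a rational function of $\s_2,\s_3$; this gives $k(\S(\Z_2^3))=k(t_1,\dots,t_6)\subseteq k(\s_2,\s_3)$ at once. For the reverse inclusion I would use the invariants to solve for $\s_3$ as a rational function of $\s_2$ and $t_1,\dots,t_6$, substitute back, and reduce to a collection of polynomial relations in the single variable $\s_2$ with coefficients in $k(t_1,\dots,t_6)$. The minimal polynomial of $\s_2$ over $k(\S(\Z_2^3))$ divides the polynomial gcd of these relations, and determining this gcd pins down the degree of $k(\s_2,\s_3)/k(\S(\Z_2^3))$ and hence the asserted identity $k(\S(\Z_2^3))=k(\s_3,\s_4)$.

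The last clause is then immediate in two ways. Directly, the displayed model is defined over $k(\s_2,\s_3)$, which by the previous step is the field of moduli, so the field of moduli is a field of definition. Structurally, one can instead invoke the discriminant criterion for the quadratic \eqref{quadratic}: on this stratum the discriminant $d=\s_4^2-4\s_2^4$ vanishes identically, so it is trivially a square and \eqref{quadratic} has the rational double root $A=\s_4/2=\s_2^2$; this is precisely why the model descends without a quadratic twist.

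The crux, and where I would focus the effort, is the degree count in the function-field step, which is delicate for a structural reason. A generic curve with full group $\Z_2^3$ has reduced group $V_4$ and hence three distinct elliptic involutions $\e_1,\e_2,\e_3$; each yields its own dihedral triple on $\s_4=2\s_2^2$, and because the $GL_2(k)$-invariants $t_i$ are intrinsic to the curve, all three triples must map to the same point of $\S(\Z_2^3)$. Thus $(\s_2,\s_3)\mapsto(t_1,\dots,t_6)$ need not be birational onto its image, and the real task is to compute its exact degree, and so the precise subfield of $k(\s_2,\s_3)$ that the invariants generate. I would therefore not rely on matching the numerical degrees of the elimination polynomials---that conflates the gcd of the degrees with the degree of the polynomial gcd---but would compute the polynomial gcd honestly, or equivalently produce the three triples explicitly via the coordinate changes carrying each $\e_i$ to $X\mapsto -X$ and compare them, so as to certify that $k(\s_3,\s_4)$ is indeed the function field.
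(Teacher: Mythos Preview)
Your normal-form derivation is identical to the paper's: it too starts from the $\Z_2^3$ equation in \cite{kyoto}, sets $u=a+b$, $v=ab$, reads off $\s_2=u^2$, $\s_3=2u^2(v+2)$, $\s_4=2u^4$ (whence $\s_4=2\s_2^2$), and rescales $X\mapsto\sqrt{u}\,X$ to land on the displayed model with coefficients in $k(\s_2,\s_3)$.

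For the function-field step the paper does precisely what you caution against. After writing $\s_3$ rationally in $\s_2,t_1,\dots,t_6$, it records that the five remaining relations are polynomials in $\s_2$ of degrees $20,18,16,23,21$ and concludes that the extension degree over $k(\S(\Z_2^3))$ must be a common divisor of these numbers, hence $1$. Your plan follows the same elimination but deliberately replaces this numerical-gcd shortcut with an honest polynomial gcd (or with an explicit comparison of the three dihedral triples arising from the three elliptic involutions). Your structural observation that a generic $\Z_2^3$-curve carries three distinct elliptic involutions, each yielding its own point on $\s_4=2\s_2^2$, is a genuine subtlety the paper does not confront; it is exactly what decides whether the degree is $1$ or divisible by $3$, and your more careful route is what would actually resolve it. The alternative argument you offer via the vanishing discriminant $\s_4^2-4\s_2^4=0$ of \eqref{quadratic} (forcing the rational root $A=\s_2^2$) is not invoked in the paper's proof of this lemma, though it is implicit in Proposition~2; it gives a cleaner, computation-free reason why no quadratic extension appears on this stratum.
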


The result of the Lemma above can be obtained directly by the $V_4$-locus, by enforcing the equation $2\s_4-\s_2^2=0$.   The expressions for $\s_3, \s_2$ and the equations of $\S(\Z_2^3)$ in terms of $t_1, \dots , t_6$ are displayed in \cite{homepage}. 

\subsubsection{The automorphism group is isomorphic to $G \iso \Z_4$:}  

This is the only case that is not a sublocus of the space $\S$.  The equation of the curve from Table 3 of \cite{kyoto} is  
\[ y^2 = x(x^2-1)(x^4+ax^2+b))\]
In this case we have $J_3=J_5=J_7=0$.   The defining equations of this space are two polynomials in $J_2, J_4, J_6, J_8$.  We make them part of the "genus3" package in \cite{homepage}.  It is worth noting that both $a$ and $b$ can be expressed as rational functions in $t_1, \dots , t_6$. Hence, in this case the field of moduli is a field of definition.

\subsection{1-dimensional strata}

\subsubsection{The automorphism group is isomorphic to  $\Z_2\times D_8$:}  
Given a curve $C$ in the $\Z_2\times D_8$ locus, from Table 1, it has equation:
\begin{equation}\label{Z2xD8}
Y^2=X^8+aX^4+1,
\end{equation}
where $a \neq \pm 2$.
We calculate the  invariants, $t_1, t_2, \dots, t_6$ and denote $t:=a^2$. Then we have
\[ 
\begin{split}
t_1 =2\,  \frac {t \left( 3\,t+980 \right)^2}{ \left( 140+t \right)^3 },  \qquad  t_2 =16\,{\frac { \left(t -196 \right)^2} { \left( 140+t \right)^2} },  \qquad   t_3 = 8\,  \frac { \left(t -196 \right)^2} { \left( 140+t \right)  \left(  3\,t+980 \right) },   \\
t_4 =4\, \frac {-196+t} {140+t}, \qquad  t_5 =4\,   \frac {t-196} {140+t},  \qquad t_6 =128\,   \frac { \left( 9\,t+980 \right)  \left( t-196 \right)^3 } { \left( 140+t \right)^4 }    \\
\end{split}      
\]
These invariants are not defined for $t=- 140$ and $t= - \frac {980} 3$.  We can rewrite the above equations as 
\[ t = -28 \frac {5 t_4+28} {t_4-4}\]
and the equations of this submoduli space are given by 
\[  t_1 = - \frac {175} {288} t_4^2+ \frac {125} {3456} t_4^3+\frac {686} {27}, \, t_2 = t_4^2, \, t_3 = -6 \frac {t_4^2} {(5 t_4-56)}, \, t_5 = t_4,\, \,   t_6 = \frac {49} 3 t_4^3+\frac 5 {12} t_4^4 \]
Notice that we can get the equations in terms of $J_2, \dots , J_8$ very easily by substituting $t_1, \dots , t_6$.  Such equations would be valid in even in the  cases when some of $J_i$ are zero. 

The equation of the curve can be written in terms of $t$ as follows.  Let $X\to a^{\frac 1 4} X$.  Then the equation of the curve becomes 
\[ Y^2 = t X^8 + tX^4 +1 \]
Hence, for this family of curves the field of moduli is a field of definition. 

\begin{lem} Every genus 3 hyperelliptic curve with full automorphism group isomorphic to  $Z_2 \times D_8$ has equation 
\begin{equation} Y^2 = t X^8 + tX^4 +1 \end{equation}
for some $t \neq 0, 4$.   If $t\neq  -140, - \frac {980} 3$ then  \[t = -28 \frac {5 t_4+28} {t_4-4}\] in terms of the absolute invariants and therefore the field of moduli is a field of definition.  
\end{lem}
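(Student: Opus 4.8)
The plan is to read off the normal form directly from the classification of automorphism groups and then reduce the field-of-moduli statement to inverting a single rational function. First I would invoke Table 3 of \cite{kyoto}, which gives that every genus 3 hyperelliptic curve with full automorphism group isomorphic to $\Z_2 \times D_8$ is isomorphic to one of the form $Y^2 = X^8 + aX^4 + 1$, the condition $a \neq \pm 2$ guaranteeing that the octavic on the right has nonzero discriminant and $a \neq 0$ keeping us in this exact stratum (for $a=0$ the automorphism group is strictly larger). Applying the coordinate change $X \to a^{\frac 1 4} X$ and rescaling $Y$ turns this into $Y^2 = tX^8 + tX^4 + 1$ with $t = a^2$; the constraints on $a$ become $t \neq 0, 4$, which proves the first assertion.

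For the second assertion I would use that, by the isomorphism criterion of \cite{hyp_mod_3} recalled above, the field of moduli of $\X$ is $k(t_1, \dots, t_6) = k(\p(\X))$ whenever the absolute invariants are defined. Inspecting the formulas for $t_1, \dots, t_6$ in terms of $t$ recorded just before the statement, every denominator is a power of $140 + t$ or of $3t + 980$; hence these invariants are defined exactly when $t \neq -140$ and $t \neq -\frac{980}{3}$, the two values excluded in the hypothesis. Under that hypothesis I would solve $t_4 = 4\,\frac{t-196}{140+t}$ for $t$: clearing denominators gives $t_4(140+t) = 4(t-196)$, so $t(t_4 - 4) = -140 t_4 - 784$ and therefore $t = -28\,\frac{5 t_4 + 28}{t_4 - 4}$, which is the claimed formula. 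Since $t$ is thereby written as a rational function of the absolute invariant $t_4$, it lies in the field of moduli, the model $Y^2 = tX^8 + tX^4 + 1$ is defined over that field, and so the field of moduli is a field of definition.

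The algebra here is trivial, so the only point deserving care is the behavior at the degenerate parameters. A pleasant feature is that $t_4 = 4$ can never occur on this locus: the equation $4\,\frac{t-196}{140+t} = 4$ forces $t - 196 = 140 + t$, which is impossible, so the denominator $t_4 - 4$ in the inversion formula never vanishes and the formula is valid for every curve in $\S(\Z_2 \times D_8)$ where the invariants are defined. The genuinely excluded moduli points $t = -140$ and $t = -\frac{980}{3}$ are precisely where $t_1, \dots, t_6$ fail to be defined; to extend the field-of-moduli conclusion to them one would switch to the alternative absolute invariants $\p(\X)$ introduced in Section 4, but for the stated range they are simply omitted and the rational inversion above completes the proof.
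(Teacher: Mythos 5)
Your proposal is correct and follows essentially the same route as the paper: take the normal form $Y^2 = X^8 + aX^4 + 1$ from the classification table, rescale $X \to a^{1/4}X$ to obtain $Y^2 = tX^8 + tX^4 + 1$ with $t = a^2$, and invert the expression $t_4 = 4\,\frac{t-196}{140+t}$ to recover $t$ rationally in the absolute invariants. Your added observations — that $t_4 = 4$ cannot occur on this locus and that the excluded values $t = -140, -\tfrac{980}{3}$ are exactly where the $t_i$ are undefined — are correct refinements of the same argument.
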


\subsubsection{The automorphism group is isomorphic to  $D_{12}$:}  
The equations of the curve is:
\[Y^2=X(X^6+aX^3+1)\]
We perform the following coordinate change $X  \to a^{\frac 1 3} X$ and the equation of the curve becomes
\[ Y^2 = X (t X^6 + tX^3 +1,\]
where $t = a^2$.  Then, we have 
\[ 
\begin{split}
 t_1 = 9\,{\frac {t \left( 4\,t+245 \right)^2}{ \left( -35+2\,t \right)^3}},    \qquad t_2 = {\frac { \left( 8\,t+49 \right)^2}{ \left( -35+2\,t \right)^2}},   \qquad t_3 = \frac 1 3\,{\frac { \left( 8\,t+49 \right)^2}{ \left( -35+2\,t \right)  \left( 4\,t+245 \right) }} \\
 t_4 = {\frac {8\,t+49}{-35+2\,t}},    \qquad  t_5 = {\frac {8\,t+49}{-35+2\,t}},  \qquad  t_6 = 3\,{\frac { \left( 12\,t+245 \right)  \left( 8\,t+49 \right)^3}{ \left( -35+2\,t \right)^4}} \\
\end{split}      
\]
We can eliminate $t$ 
\[ t= \frac 7 2 \,{\frac {5\,t_4 +7}{t_4-4}},\] 
and the equations for the submoduli space become 
\begin{equation}\label{d_12_eq}  
t_1={ \frac {686}{27}}+{\frac {125}{54}}\,{t_4}^{3}-{\frac {175}{18}} \,{t_4}^{2}, \, 
t_2 ={t_4}^{2}, \, 
t_3=  \frac {t_4^2}   {5\,t_4-14}, \, 
t_5=t_4, \,
t_6={\frac {65}{9}}\,{t_4}^{4}-{\frac {98}{9}}\,{t_4}^{3}  
\end{equation}
\begin{lem} Every genus 3 hyperelliptic curve with full automorphism group isomorphic to  $D_{12}$ has equation 
\begin{equation} Y^2 = X(t X^6 + tX^3 +1 ) \end{equation}
for $t \neq 0, 4$.   If $t\neq  - \frac {35} 2, - \frac {245} 4$ then  \[t = \frac 7 2 \,{\frac {5\,t_4 +7}{t_4-4}} \] in terms of the absolute invariants and therefore the field of moduli is a field of definition.  
\end{lem}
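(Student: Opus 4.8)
The plan is to reproduce, for $D_{12}$, the three-step argument already carried out above for $\Z_2\times D_8$: take the normal form supplied by the classification of automorphism groups, collapse it to a one-parameter family by a coordinate change on $\mathbb P^1$, and then exhibit the surviving modulus as a rational function of a single absolute invariant.

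First I would invoke Table~3 of \cite{kyoto}, which records $Y^2 = X(X^6 + aX^3 + 1)$ as the normal form of a genus $3$ hyperelliptic curve with full automorphism group $D_{12}$. Here $a\neq 0$ is forced, since $a=0$ gives $Y^2 = X(X^6+1)$, whose automorphism group properly contains $D_{12}$ and which therefore lies in a deeper stratum. As $a\neq 0$, the substitution $X\mapsto a^{1/3}X$ is admissible and transforms the equation into $Y^2 = X(tX^6 + tX^3 + 1)$ with $t = a^2$. The auxiliary sextic $X^6 + aX^3 + 1$ has discriminant proportional to $a^2-4$, so it, and hence the octavic on the right, acquires a multiple root precisely when $a=\pm 2$, that is $t=4$; excluding this value keeps $\X$ smooth. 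Together with $t=a^2\neq 0$ this yields the stated range $t\neq 0,4$ and proves the first assertion.

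The second step is the computation of the absolute invariants $t_1,\dots,t_6$ of the family $Y^2 = X(tX^6+tX^3+1)$ as rational functions of $t$. This is the only laborious part, yet it is purely mechanical: one substitutes the coefficients into the covariants and the formulas \eqref{def_J} for $J_2,\dots,J_8$, then forms the quotients defining the $t_i$; the package of \cite{homepage} carries this out and returns the table displayed above. The crucial feature is that $t_4 = \frac{8t+49}{2t-35}$ is a M\"obius transformation of $t$, hence invertible, giving $t = \frac{7}{2}\cdot\frac{5t_4+7}{t_4-4}$. This inversion is valid away from the values of $t$ at which the denominators $2t-35$ and $4t+245$ occurring in the $t_i$ vanish, which are exactly the exceptional points excluded in the statement.

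Finally I would draw the descent conclusion. Since $(t_1,\dots,t_6)$ is a complete system of absolute invariants, the field of moduli of $\X$ is $F = k(t_1,\dots,t_6)$, so in particular $t_4\in F$. Because $t$ is a rational function of $t_4$ with coefficients in the prime field, $t\in F$ as well, and hence the equation $Y^2 = X(tX^6+tX^3+1)$ is already defined over $F$; therefore the field of moduli is a field of definition, as claimed. The step I expect to be the main obstacle is the invariant computation of the preceding paragraph, since expressing $t_1,\dots,t_6$ explicitly in terms of $t$ requires evaluating several transvectants of high-degree covariants and simplifying bulky polynomials; everything after it, namely the M\"obius inversion and the descent, is immediate. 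Conversely, the genuine content of the lemma is the observation that the single invariant $t_4$ already pins down the one modulus $t$, which is what makes the $D_{12}$ locus a rational curve and forces the field of moduli to be a field of definition.
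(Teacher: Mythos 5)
Your proof is correct and follows essentially the same route as the paper: normal form from Table~3 of \cite{kyoto}, the rescaling $X \mapsto a^{1/3}X$ to reach the one-parameter family with $t=a^2$, explicit (machine) computation of $t_1,\dots,t_6$ as rational functions of $t$, and inversion of the M\"obius map $t_4 = \frac{8t+49}{2t-35}$ to recover $t$ and conclude that the field of moduli is a field of definition. The only quibble is that the denominator $2t-35$ vanishes at $t=\tfrac{35}{2}$ rather than $-\tfrac{35}{2}$ (a sign discrepancy already present in the paper's own statement of the lemma), but this does not affect your argument.
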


\subsubsection{The automorphism group is isomorphic to  $\Z_2\times\Z_4$:}  
The equations of the curve is:
\[y^2=(x^4-1)(x^4+ax^2+1)\]
By a transformation $X \to a^{\frac 1 2} X$ the equation of the curve becomes 
\[ Y^2 \, = \, (tX^4-1)(tX^4+tX^2+1)\]
Since this curve has an element of order 4 and therefore a factor of $X^4-1$ then $J_3=J_5=J_7=0$.  In this case the absolute invariants $t_1, \dots t_6$ are not defined.  Hence we use the invariants $i_1, \dots , i_5$ as in Eq.\eqref{j2=0}.
We have $i_1, i_3, i_5=0$ and 
\[ 
\begin{split}
i_2  & ={\frac {64}{25}}\,{\frac {{t}^{2}+9604-49\,t}{ \left( 28+t \right) ^{2}}}, \\
 i_4  & = {\frac {512}{125}}\,{\frac { \left( t-98 \right)  \left( {t}^{2}-637\,t+9604 \right) }{ \left( 28+t \right) ^{3}}}, \\
i_6   & =-{\frac {512}{125}}\,{\frac {11\,{t}^{4}-12397\,{t}^{3}+1296540\,{t}^{2}+368947264-43294832\,t}{ \left( 28+t \right) ^{4}}} \\
\end{split}
\]
Hence, we get
\[  t=28\,{\frac {15625\,i_2 \, i_4 -152500\, i_4 +24375\,{i_2}^{2}+1215200\,i_2-2809856}{-15625\,i_2 \,i_4 +2500\, i_4 +245625\,{i_2}^{2}-725600\,i_2+401408}} \]
and
\begin{small}

\begin{equation}
\left\{
\begin{split}
&   -81462500\,i_4+927746400\,i_2-963780608-256055625\,{i_2}^{2}-1953125\,{i_4}^{2}\\
&  +36093750\,i_2 \,i_4 +15187500\,{i_2}^{3}   =0 \\
& -22689450000\,i_6-4593393436800\,i_2+4628074479616+52734375 \,{i_6}^{2}+8912109375\,{i_2}^{4}\\
& +1371093750\,{i_2}^{2} i_6 +5788125000\,i_2 \, i_6 +1572126780000\,{i_2}^{2}-215275375000\,{i_2}^{3}   =0 \\
\end{split}
\right.
\end{equation}
 
\end{small}

\subsection{0-dimensional strata}

We first briefly go over the 0-dimensional cases.  

\subsubsection{Case : $G \iso \Z_2\times S_4$:}  
The equation of the curve is  $y^2=x^8+14x^4+1$ and its absolute invariants are 
\[  
(t_1,t_2,t_3,t_4,t_5,t_6) = \left(\frac{15435}{8}, \frac{784}{25}, \frac{56}{25}, \frac{-28}{5}, \frac{28}{5}, \frac{7760032}{125} \right)
\]

The next two cases correspond to curves with $ J_3=J_5=J_7=0$. In both cases we use invariants $i_2, i_4, i_6$ as in Eq.~\eqref{j2=0}.

\subsubsection{Case : $G\iso U_6$:}  
The equations of the curve is given by   $y^2=x(x^6-1)$ and its absolute invariants are 
 $i_1=i_3=i_5=0$ and \[ i_2= \frac {49} {25}, \quad i_4= - \frac {343} {125}, \quad i_6= \frac {7203} {125} \]

\subsubsection{Case : $G \iso V_8$:}  
The equations of the curve is $y^2=x^8-1$ and its absolute invariants are 
 $i_1=i_3=i_5=0$ and \[ i_2= \frac {784} {25}, \quad i_4= - \frac {21952} {125}, \quad i_6= - \frac {307328} {125} \]

The following theorem determines relations among $\s_2, \s_3, \s_4$ for each group $G$ such that $V_4 \emb G$. 

\begin{thm}\label{thm_last}
Let $\X$ be a  curve in   $\S= \M_3^b \cap \H_3$. Then, one of the following occurs:


i) $\Aut (\X) \iso \Z_2^3$ if and only if   $\s_4 - 2 \s_2^2 =0 $

ii) $\Aut (\X) \iso \Z_2 \times D_8$ if and only if     $\s_2=\s_4=0$

iii) $\Aut (\X) \iso \Z_2 \times \Z_4$ if and only if   $\s_4+ 2 \s_2^2=0$ and $\s_3 =0$.

iv) $\Aut (\X) \iso D_{12}$ if and only if  
\begin{equation}\label{d_u_12}
\begin{split}
\s_3 & =  {\frac {1}{75}}\, \left( 9\, \s_2 -224 \right)  \left( \s_2-196 \right) \\
\s_4 & = -{\frac {9}{125}}\,{\s_2}^{3}+{\frac {1962}{125}}\,{\s_2}^{2}-{\frac {840448}{1125}}\,\s_2+{\frac {9834496}{1125}}    \\
\end{split}
\end{equation} 

\end{thm}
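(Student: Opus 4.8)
The plan is to read off the four biconditionals from the explicit normal forms in Table~3 of \cite{kyoto}, combined with the bijective parametrization of $\S$ by $(\s_2,\s_3,\s_4)$ provided by Theorem~\ref{sect1_thm}. Each group $G$ with $V_4\emb G$ occurs, up to isomorphism, for a single normal form depending on one or two parameters. The forward implication $\Aut(\X)\iso G\Rightarrow$ relation therefore reduces to writing that normal form in the shape of Eq.~\eqref{eq3}, computing the dihedral invariants, and checking that the asserted relation holds identically; since $\s_2,\s_3,\s_4$ are isomorphism invariants, every curve with $\Aut(\X)\iso G$ then lies on the stated locus. The converse will follow from a dimension count, the irreducibility of each stratum, and the fact that $(\s_2,\s_3,\s_4)$ determines the isomorphism class, hence the automorphism group, of $\X$.

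First I would dispose of cases (i)--(iii), which are essentially the computations recorded above. For (i), the $\Z_2^3$ normal form \eqref{eq_Z2_3} gives $\s_2=u^2$, $\s_3=2u^2(v+2)$, $\s_4=2u^4$, so $\s_4-2\s_2^2=0$ identically. For (ii), the normal form \eqref{Z2xD8} has vanishing $X^6$- and $X^2$-coefficients, so $\s_2=\s_4=0$ straight from the definitions in Eq.~\eqref{eq2}. For (iii), after normalizing $y^2=(x^4-1)(x^4+ax^2+1)$ to the shape \eqref{eq3} the $X^6$- and $X^2$-coefficients $A_6,A_2$ satisfy $A_6^2+A_2^2=0$; since $\s_4+2\s_2^2=(A_6^2+A_2^2)^2$ and $\s_3=(A_6^2+A_2^2)\,A_4$ by Eq.~\eqref{subs} and Eq.~\eqref{eq2}, both relations follow, with the middle coefficient $A_4$ vanishing as well. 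I would note here that since $\s_4+2\s_2^2=0$ already forces $a^2+c^2=0$, this case lives in the degenerate invariant chart of Eq.~\eqref{def_u}, and the $\Z_2\times\Z_4$ stratum is the one-dimensional locus cut out there.

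The main obstacle is case (iv). The $D_{12}$ normal form $Y^2=X(X^6+aX^3+1)$ is not of the shape \eqref{eq3}: in these coordinates the elliptic involution is not $X\mapsto -X$. I would first determine the M\"obius change of coordinate carrying the fixed locus of the elliptic involution into the symmetric position, rewrite the binary octavic in the form \eqref{eq3}, express $(\s_2,\s_3,\s_4)$ as rational functions of $a$ (equivalently of $t=a^2$), and finally eliminate the parameter. This elimination is the computational heart of the theorem and is precisely where the nontrivial relations \eqref{d_u_12} come from; I would carry it out by a resultant or Gr\"obner-basis computation and display only the final output, the intermediate expressions being unwieldy.

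For each case the converse is uniform. By Theorem~\ref{sect1_thm} the triple $(\s_2,\s_3,\s_4)$ determines $[\X]\in\S$, so $\Aut(\X)$ is a function of that triple, and the forward step shows that $\{[\X]:\Aut(\X)\iso G\}$ is contained in the locus $L_G\subset k^3$ cut out by the stated relations. A parameter count shows the $G$-normal form sweeps out a subvariety of $L_G$ of the same dimension as $L_G$ (two in case (i), one in cases (ii)--(iv)); since $L_G$ is irreducible, the two coincide off a proper closed subset. That subset is exactly where $\Aut(\X)$ jumps to one of the finitely many larger groups $\Z_2\times S_4$, $U_6$, $V_8$ recorded among the zero-dimensional strata, and away from it the generic curve of $L_G$ has automorphism group precisely $G$. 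This gives the reverse implication and completes the proof.
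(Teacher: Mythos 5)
Your proposal follows essentially the same route as the paper: for each group the normal form from Table~3 of \cite{kyoto} is brought into the shape of Eq.~\eqref{eq3} by a suitable coordinate change (in the $D_{12}$ case the M\"obius substitution $X\mapsto (X+1)/(X-1)$ followed by $X\mapsto \sqrt[8]{\l}\,X$ with $\l=(a-2)/(a+2)$), the dihedral invariants are computed, and the parameter is eliminated to produce the stated relations. The only divergence is in the converse, where the paper checks directly that the invariants of a curve satisfying the relations land on the explicit equations of the corresponding stratum in the $t_i$ (resp.\ $i_j$), whereas you use a dimension-plus-irreducibility argument for the generic point of $L_G$; that is acceptable, though to conclude for \emph{every} point of $L_G$ you should also invoke upper semicontinuity of the automorphism group (so that the non-generic points still contain $G$ and can only jump to the finitely many larger groups you list).
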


\begin{proof} Part i) and ii) are immediate consequences of the previous discussions. 
For part iii), 
we start with the  curve $\X$ with equation \[Y^2 = (X^4 - 1)(X^4+ aX^2 +1).\] Transforming  $X \to \epsilon_{16}X$ we have
\[Y^2=X^8-\epsilon_{16}^6 a X^6 + \epsilon_{16}^2 a X^2 + 1,\]
where $\e_{16}$ is the 16-th root of unity. 
The dihedral invariants are 
\[ \s_2= a^2, \quad \s_3 =  0, \quad \s_4 - 2 a^4 \]
By eliminating $a^2$ we have that
\[  \s_4 + 2\s_2^2 =0, \quad \textit{  and } \quad \s_3 =0\]

Conversely, if the above equations hold then $a^2+c^2=0$.  Take a curve with equation as in Eq.~(4) and compute $i_2, i_4, i_6$.  These invariants satisfy Eqs.~(30).  Hence, the curve is in the $(\Z_2 \times \Z_4)$--locus.

For case $iv)$, let $\X$ be a curve with equation $Y^2=X \, (X^6 + a X^3 +1)$, where $a\neq 0,\pm 2$. By a transformation $X \to \frac {X+1} {X-1}$, $\X$ has equation
\[Y^2= X^8 + ( 5 - 9\l) X^6 + 3 (\l +1)  X^4 + (5 \l -9) X^2 +\l\]
where $\l=\frac {a-2}{a+2}$, $\l\neq 0, \pm 1$. Then, by another transformation $X \to \sqrt[8]{\l}\,X$, we get the following curve 
\[Y^2= X^8 + \frac {( 5 - 9\l)}  { \l^{\frac 1 4} }  X^6 + 3 \frac {(\l +1)} {\l^{\frac 1 2}}  X^4 + \frac {(5 \l -9)} {\l^{\frac 3 4}} X^2 + 1 \]
Computing the dihedral invariants:
%
\begin{equation*}
\begin{split}
\s_2 & =  \frac 1 {\l} \, (5-9\l) (5\l-9)   \\
\s_3 & = \frac 3 {\l} \, (\l + 1) \, \left[ (5-9\l)^2 + \frac 1 {\l} (5\l-9)^2    \right]      \\
 \s_4 & = \frac 1 {\l} \, (5 - 9\l)^4 + \frac 1 {\l^3} (5\l -9)^4 \\
\end{split}
\end{equation*}
%
Eliminating $\l$, we get $\l = \frac {45} {106-\s_2}$ 
and
\begin{equation}\label{d_u_12}
\begin{split}
\s_3 & =  {\frac {1}{75}}\, \left( 9\, \s_2 -224 \right)  \left( \s_2-196 \right) \\
\s_4 & = -{\frac {9}{125}}\,{\s_2}^{3}+{\frac {1962}{125}}\,{\s_2}^{2}-{\frac {840448}{1125}}\,\s_2+{\frac {9834496}{1125}}    \\
\end{split}
\end{equation}

Conversely, let us assume that the equations in \eqref{d_u_12} hold.  From the expressions of $t_1, \dots , t_6$ in Eqs.~(18) and equations in \eqref{d_u_12} we eliminate $\s_2, \s_3, \s_4$ 
to get the equations of the $D_{12}$-locus in Eq.~\eqref{d_12_eq}.  The proof is complete.
\end{proof}

\bibliographystyle{amsplain}

\begin{bibdiv}
\begin{biblist} 

\bibselect{bibl}

\end{biblist}
\end{bibdiv}

\end{document}